\documentclass[11pt,letterpaper,reqno]{amsart}

\usepackage{amsmath,amssymb,amsthm,amsfonts,mathtools}
\usepackage{mathrsfs}
\usepackage{xcolor}
\usepackage[T1]{fontenc}
\usepackage{microtype}
\usepackage{aliascnt}

\usepackage[colorlinks=true,
  linkcolor=blue,
  citecolor=blue,
  urlcolor=blue]{hyperref}
\usepackage[nameinlink,capitalize,noabbrev]{cleveref}

\allowdisplaybreaks
\numberwithin{equation}{section}

\newtheorem{theorem}{Theorem}[section]

\newaliascnt{problem}{theorem}
\newtheorem{problem}[problem]{Problem}
\aliascntresetthe{problem}

\newaliascnt{proposition}{theorem}
\newtheorem{proposition}[proposition]{Proposition}
\aliascntresetthe{proposition}

\newaliascnt{lemma}{theorem}
\newtheorem{lemma}[lemma]{Lemma}
\aliascntresetthe{lemma}

\newaliascnt{corollary}{theorem}

\aliascntresetthe{corollary}

\newaliascnt{remark}{theorem}
\newtheorem{remark}[remark]{Remark}
\aliascntresetthe{remark}

\crefname{theorem}{Theorem}{Theorems}
\Crefname{theorem}{Theorem}{Theorems}
\crefname{problem}{Problem}{Problems}
\Crefname{problem}{Problem}{Problems}
\crefname{proposition}{Proposition}{Propositions}
\Crefname{proposition}{Proposition}{Propositions}
\crefname{lemma}{Lemma}{Lemmas}
\Crefname{lemma}{Lemma}{Lemmas}
\crefname{corollary}{Corollary}{Corollaries}
\Crefname{corollary}{Corollary}{Corollaries}
\crefname{remark}{Remark}{Remarks}
\Crefname{remark}{Remark}{Remarks}

\newaliascnt{claim}{theorem}

\aliascntresetthe{claim}

\newaliascnt{question}{theorem}

\aliascntresetthe{question}

\newaliascnt{conjecture}{theorem}

\aliascntresetthe{conjecture}

\theoremstyle{definition}

\newaliascnt{definition}{theorem}

\aliascntresetthe{definition}

\newaliascnt{example}{theorem}

\aliascntresetthe{example}

\crefname{claim}{Claim}{Claims}
\Crefname{claim}{Claim}{Claims}
\crefname{question}{Question}{Questions}
\Crefname{question}{Question}{Questions}
\crefname{conjecture}{Conjecture}{Conjectures}
\Crefname{conjecture}{Conjecture}{Conjectures}
\crefname{definition}{Definition}{Definitions}
\Crefname{definition}{Definition}{Definitions}
\crefname{example}{Example}{Examples}
\Crefname{example}{Example}{Examples}

\DeclareMathOperator{\Span}{span}
\DeclareMathOperator{\diam}{diam}
\newcommand{\R}{\mathbb R}

\newcommand{\II}{\mathrm{II}}

\begin{document}

\title[Sharp Neumann eigenvalue ratios]{Sharp ratios for low-index Neumann eigenvalues on convex domains}

\author[Q.~Tang]{Quanyu Tang}
\address{School of Mathematics and Statistics, Xi'an Jiaotong University, Xi'an 710049, P. R. China}
\email{tangquanyu827@gmail.com}

\author[H.~Zhang]{Haiqi Zhang}
\address{School of Mathematics, Shandong University, Jinan 250100, P. R. China}
\email{ZHQAQ2024@outlook.com}

\subjclass[2020]{Primary 35P15; Secondary 35J25, 52A20}

\keywords{Neumann eigenvalues, convex domains, eigenvalue ratios}

\begin{abstract}
Let $\Omega\subset\R^N$ be a bounded open convex set, and let $0=\mu_0(\Omega)<\mu_1(\Omega)\le \mu_2(\Omega)\le\cdots$ be the Neumann eigenvalues of the Laplacian, repeated according to multiplicity. We prove the sharp bounds
$$
\mu_2(\Omega)\le 4\mu_1(\Omega),\qquad
\mu_3(\Omega)\le 9\mu_1(\Omega).
$$
The first estimate resolves a problem attributed to Henrot, while the second gives the next sharp case predicted by the one-dimensional model. The constants are optimal in every dimension.
\end{abstract}

\maketitle

\section{Introduction}

Throughout the paper, \(\Omega\subset\R^N\) is a nonempty bounded open
convex set. Such a set is connected and has Lipschitz boundary. The Neumann
Laplacian on \(\Omega\) is understood through the closed quadratic form
\[
  w\mapsto \int_\Omega |\nabla w|^2\,dx, \qquad w\in H^1(\Omega).
\]
Since the embedding \(H^1(\Omega)\hookrightarrow L^2(\Omega)\) is compact,
the associated self-adjoint operator has compact resolvent. Its eigenvalues,
repeated according to multiplicity, are denoted by
\begin{equation}\label{eq:spectrum}
0=\mu_0(\Omega)<\mu_1(\Omega)\le \mu_2(\Omega)\le\cdots.
\end{equation}
Thus \(\mu_1\) is the first positive Neumann eigenvalue. The quotients
\(\mu_k(\Omega)/\mu_1(\Omega)\) are scale invariant. A natural guiding
problem is whether, under convexity, these quotients are controlled by the
one-dimensional model.

The first case of this question has been explicitly formulated in several
closely related forms. In particular, Ashbaugh included the problem of
bounding the ratio between the first two positive Neumann eigenvalues on
convex domains as Problem~10 in his list of open problems on Laplacian
eigenvalues \cite[Problem~10]{Ashbaugh1999Open}. The American Institute of
Mathematics problem list from the workshop \emph{Shape optimization with
surface interactions} records the estimate
\(\mu_2(\Omega)/\mu_1(\Omega)\le 4\) as Conjecture~3.2, attributed to
Henrot, together with the broader expected \(k^2\)-pattern for higher
Neumann eigenvalue ratios~\cite[Conjecture~3.2]{AIMProblem}. The same AIM entry
points to Ashbaugh--Benguria \cite{AshbaughBenguria1993} for the
two-dimensional formulation and to Antunes--Henrot
\cite{AntunesHenrot2011} for partial analytic progress in dimension two. In the notation of
\eqref{eq:spectrum}, the first case asks for the sharp estimate $\mu_2(\Omega)\le 4\mu_1(\Omega)$ for every bounded open convex set \(\Omega\subset\R^N\). 
We record this problem in the following form.

\begin{problem}\label{prob:ratio}
Prove that every nonempty bounded open convex set \(\Omega\subset\R^N\)
satisfies
\[
  \frac{\mu_2(\Omega)}{\mu_1(\Omega)}\le 4.
\]
\end{problem}

The broader conjectural pattern recorded in the same AIM entry \cite[Conjecture~3.2]{AIMProblem} is
\begin{equation}\label{eq:general-conjecture}
\frac{\mu_k(\Omega)}{\mu_1(\Omega)}\le k^2,\qquad k\ge2.
\end{equation}
This pattern is forced by the one-dimensional model: if \(I=(0,L)\), then the Neumann eigenvalues are
\[
\mu_k(I)=\frac{k^2\pi^2}{L^2},\qquad k=0,1,2,\ldots,
\]
and therefore equality holds in \eqref{eq:general-conjecture} on intervals.

This ratio problem belongs to the broader study of Neumann eigenvalue
bounds on convex domains. The first positive eigenvalue satisfies the
classical Payne--Weinberger inequality
\[
\mu_1(\Omega)\ge \frac{\pi^2}{\diam(\Omega)^2}
\]
\cite{PayneWeinberger1960}. Kr\"oger proved volume-dependent upper bounds for Neumann eigenvalues on
general bounded Euclidean domains \cite{Kroger1992}, and later obtained
sharp diameter-dependent upper bounds in the convex class
\cite{Kroger1999}. Henrot and Michetti subsequently extended this
diameter-constrained picture, via one-dimensional Sturm--Liouville
reductions, to a broader class of domains including convex domains
\cite{HenrotMichetti2024}.
In the broader direction of eigenvalue-ratio estimates, Liu proved a dimension-free \(O(k^2)\) upper bound for eigenvalue ratios
in nonnegative-curvature settings, with the order in \(k\) being optimal,
including closed weighted Riemannian manifolds with nonnegative
Bakry--\'Emery Ricci curvature and finite-dimensional Alexandrov spaces
of nonnegative curvature \cite{Liu2014}. Recent universal inequalities for Neumann eigenvalues on convex domains
were obtained by Funano
\cite{Funano2024,Funano2026Distribution,Funano2026Planar}.
Related shape-optimization problems for Neumann eigenvalues under
convexity, diameter, and perimeter constraints, including existence results
and numerical investigations, were studied by Bogosel--Henrot--Michetti
\cite{BogoselHenrotMichetti2024}.

We now state the two main results. The first gives the sharp estimate
for the second positive Neumann eigenvalue and resolves the convex Neumann
ratio problem in \Cref{prob:ratio}.

\begin{theorem}\label{thm:mu2}
Let \(N\ge1\), and let \(\Omega\subset\R^N\) be a nonempty bounded open
convex set. Then
\begin{equation}\label{eq:main-mu2}
  \mu_2(\Omega)\le 4\mu_1(\Omega).
\end{equation}
The constant \(4\) is optimal in every dimension.
\end{theorem}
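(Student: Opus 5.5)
The sharpness part is immediate, and I will dispose of it first. For the box $\Omega_\varepsilon=(0,L)\times(0,\varepsilon)^{N-1}$ with $\varepsilon\le L/2$, separation of variables gives $\mu_1=\pi^2/L^2$ and $\mu_2=\min\{4\pi^2/L^2,\pi^2/\varepsilon^2\}=4\pi^2/L^2$. Hence equality holds in \eqref{eq:main-mu2} in every dimension. For the inequality I may assume $\mu_2>\mu_1$, since otherwise there is nothing to prove. By min--max, $\mu_2(\Omega)$ is at most the maximal Rayleigh quotient over any three-dimensional subspace of $H^1(\Omega)$. Equivalently, $\mu_2\le \int|\nabla w|^2/\int w^2$ for any nonzero $w\perp\{1,u\}$, where $u$ is a first eigenfunction. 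The whole problem is therefore to build one good test function out of $u$.

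\textbf{Choice of test function.} I would be guided by the interval. There $u=\cos(\pi x/L)$ and $\mu_1 u^2-|u'|^2=\mu_1\cos(2\pi x/L)$, which is exactly the second eigenfunction. So I take
\[
v=\mu_1u^2-|\nabla u|^2,\qquad w=v-\bar v-\beta u,
\]
with $\bar v$ the mean of $v$ and $\beta$ chosen so that $w\perp u$. Note that $u\in H^3_{\rm loc}$ and, on a convex domain, $u\in H^2(\Omega)$. I would justify the computations first on smooth strictly convex approximating domains and then pass to the limit, using continuity of Neumann eigenvalues under Hausdorff convergence of convex sets. We have $\nabla v=2\mu_1u\nabla u-2\nabla^2u\,\nabla u$, and the target inequality is
\[
\int_\Omega |\mu_1u\nabla u-\nabla^2u\nabla u|^2\le \mu_1\Big(\int_\Omega v^2-|\Omega|\bar v^2-\beta^2\!\!\int_\Omega u^2\Big).
\]
In one dimension this is an identity.

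\textbf{Key analytic input.} The tools are Bochner's formula and Reilly's formula with Neumann data. For $f=u$ these give
\[
\tfrac12\Delta|\nabla u|^2=|\nabla^2u|^2-\mu_1|\nabla u|^2
\quad\text{in }\Omega,\qquad
\partial_\nu|\nabla u|^2=-2\,\II(\nabla u,\nabla u)\le0\ \text{ on }\partial\Omega,
\]
and the sign in the boundary term is where convexity enters. I would integrate these identities against the weights $1$, $u^2$ and $|\nabla u|^2$. This expresses $\int |\nabla^2u\nabla u|^2$, $\int u\,\nabla^2u(\nabla u,\nabla u)$ and $\int v^2$ in terms of a few basic quantities plus boundary terms of favorable sign. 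The basic quantities are $A=\int u^2$, $B=\int u^2|\nabla u|^2$, $C=\int |\nabla u|^4$, $D=\int u^4$ and $\int|\nabla^2 u|^2|\nabla u|^2$. The cross term $\int u\,\nabla^2u(\nabla u,\nabla u)=\tfrac12\int u\,\nabla u\cdot\nabla|\nabla u|^2$ integrates by parts without boundary contribution, because $\partial_\nu u=0$. The term $\int|\nabla^2u\nabla u|^2$ must be bounded above. I would first try the pointwise inequality $|\nabla^2u\nabla u|^2\le|\nabla^2u|^2|\nabla u|^2$ followed by the Bochner identity with weight $|\nabla u|^2$. The boundary term $\int_{\partial\Omega}|\nabla u|^2\,\II(\nabla u,\nabla u)$ then appears with a good sign.

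\textbf{Main obstacle.} The delicate step is the final algebra, and it has two parts. The first is that the mean and projection corrections $|\Omega|\bar v^2$ and $\beta^2\int u^2$ must not destroy the inequality. In 1D they vanish, but in general they need not. Here I would use the one-parameter freedom to replace $v$ by $\mu_1u^2-t|\nabla u|^2$ and optimize over $t$. The second is that the crude Cauchy--Schwarz step may lose too much in higher dimensions. If it does, I would instead keep only the component of $\nabla^2u$ in the direction $\nabla u$. I would then control it by the refined Kato-type inequality coming from the trace identity $\Delta u=-\mu_1u$, which controls $\nabla^2 u(\nabla u,\nabla u)$ in terms of the remaining Hessian part. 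I expect closing this quadratic-form inequality in $A,B,C,D$, uniformly in $N$, to be the real difficulty.
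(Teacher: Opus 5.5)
Your choice of test function is sound, but the proposal stops exactly where the proof lives, and the tools you list for that step do not finish it.

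Two preliminary facts simplify your setup. First, $\int_\Omega v=\mu_1\int u^2-\int|\nabla u|^2=0$, so the mean correction vanishes identically. Second, $w=\frac12(-\Delta)(u^2-cu-d)$ is precisely the Laplacian of the paper's test function.

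Write $\lambda=\mu_1$, $g=|\nabla u|^2$, $S=\int u^2$, $M_k=\int u^k$, $G=\int g^2$ and $J=\int|D^2u\,\nabla u|^2$. The moment identities $m\int u^{m-1}g=\lambda M_{m+1}$ give
$\beta=\lambda M_3/(2S)$, $\int w^2=G+\frac{\lambda^2}{3}M_4-\frac{\lambda^2M_3^2}{4S}$ and $\int|\nabla w|^2=4\lambda G+4J-\frac{\lambda^3M_3^2}{4S}$.
Your displayed target drops the $-\beta\nabla u$ part of $\nabla w$; this only makes it slightly stronger than needed, which is harmless. Hence $\int|\nabla w|^2\le4\lambda\int w^2$ is equivalent to
\[
4J\le\lambda^3\Bigl(\frac43M_4-\frac34\,\frac{M_3^2}{S}\Bigr).
\]
The Bochner step you describe, with weight $g$ and $|D^2u\,\nabla u|^2\le|D^2u|^2g$, gives $J\le\lambda G/3$. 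So you still need $G\le\lambda^2\bigl(M_4-\frac{9}{16}M_3^2/S\bigr)$.

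The only route to $G$ you indicate is the uncentered identity $G=\frac{\lambda^2}{3}M_4-2\int u\,D^2u(\nabla u,\nabla u)$. Combined with Cauchy--Schwarz and $J\le\lambda G/3$, it yields only $G\le\lambda^2M_4$. That cannot absorb the $M_3^2/S$ term, which is nonzero in general. Varying $t$ or invoking a refined Kato inequality does not address this obstruction.

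The missing idea is to center $u$ at its mean for the measure $g\,dx$. Put $\alpha=\int ug/\int g=M_3/(2S)$ and $Q=\int(u-\alpha)^2g=\lambda\bigl(\frac{M_4}{3}-\frac{M_3^2}{4S}\bigr)$. The divergence theorem applied to $(u-\alpha)g\nabla u$ gives $G=\lambda Q-2\int(u-\alpha)D^2u(\nabla u,\nabla u)$. Cauchy--Schwarz and $J\le\lambda G/3$ then give $G\le\lambda Q+2\sqrt{\lambda QG/3}$, hence $G\le3\lambda Q=\lambda^2\bigl(M_4-\frac34M_3^2/S\bigr)$. This is the paper's fourth-order estimate, and with it your inequality closes with room to spare.

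Compared with the paper: the paper applies $\|Av\|^2\ge\mu_2\int|\nabla v|^2$ to $v=u^2-cu-d$ itself, and that needs only the centered bound on $G$. Your quotient for $w=\frac12Av$ equals $\|A^{3/2}v\|^2/\|Av\|^2$. By the spectral theorem and Cauchy--Schwarz this is never smaller than the paper's quotient $\|Av\|^2/\|A^{1/2}v\|^2$. So your route additionally consumes $J\le\lambda G/3$, which is fortunately available.
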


The next theorem proves the corresponding sharp estimate for the third
positive Neumann eigenvalue.

\begin{theorem}\label{thm:mu3}
Let \(N\ge1\), and let \(\Omega\subset\R^N\) be a nonempty bounded open
convex set. Then
\begin{equation}\label{eq:main-mu3}
  \mu_3(\Omega)\le 9\mu_1(\Omega).
\end{equation}
The constant \(9\) is optimal in every dimension.
\end{theorem}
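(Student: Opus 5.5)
The plan is to reduce \eqref{eq:main-mu3} to a one-dimensional moment problem, using test functions built from a first eigenfunction. Fix a first Neumann eigenfunction \(u\), so \(-\Delta u=\mu_1 u\) in \(\Omega\), \(\partial_\nu u=0\) on \(\partial\Omega\), \(\int_\Omega u=0\). By min--max it suffices to find a \(4\)-dimensional space \(V\subset H^1(\Omega)\) with \(\int_\Omega|\nabla w|^2\le 9\mu_1\int_\Omega w^2\) for all \(w\in V\). I will take
\[
V=\{p(u):\ p \text{ a real polynomial of degree}\le 3\}.
\]
Its dimension is \(4\) because \(u\) is nonconstant, and it is the natural analogue of \(\{1,\cos x,\cos 2x,\cos 3x\}=\{T_k(\cos x)\}_{k\le 3}\) on an interval.

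\textbf{Step 1: energy depends only on the distribution of \(u\).} For \(p\in V\), let \(Q\) be an antiderivative of \((p')^2\). Integrating by parts with the Neumann condition gives
\[
\int_\Omega p'(u)^2|\nabla u|^2
=\int_\Omega \nabla\big(Q(u)\big)\cdot\nabla u
=\mu_1\int_\Omega u\,Q(u).
\]
The choice of constant in \(Q\) is irrelevant since \(\int_\Omega u=0\). Writing \(\nu\) for the push-forward of Lebesgue measure on \(\Omega\) under \(u\), the required inequality becomes the following statement about a \(4\times4\) quadratic form:
\[
\int t\,Q(t)\,d\nu(t)\le 9\int p(t)^2\,d\nu(t)\qquad\text{for all } p\in \Span\{1,t,t^2,t^3\}.
\]
This cannot hold for an arbitrary mean-zero measure \(\nu\), so the convexity of \(\Omega\) must enter through constraints on \(\nu\).

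\textbf{Step 2: import convexity through a gradient estimate.} I will use the Payne--Weinberger/Kr\"oger-type gradient comparison for Neumann eigenfunctions on convex domains. Normalized suitably, it says \(|\nabla u|^2\le \mu_1(M^2-u^2)\), or more sharply that \(|\nabla u|\) is dominated by that of the one-dimensional model solution with the same range \([\min u,\max u]\). Combining this with Step 1 applied to \(q\ge0\), namely \(\int q(u)|\nabla u|^2=\mu_1\int u\big(\int^u q\big)\), yields a family of linear inequalities on \(\nu\):
\[
\int t\Big(\int_0^t q\Big)\,d\nu(t)\le \int q(t)\,(M^2-t^2)\,d\nu(t)\qquad (q\ge 0).
\]
In the model case \(\Omega\) an interval, \(\nu\) is the arcsine law and these become equalities. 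The goal is then to show that these constraints, together with \(\int t\,d\nu=0\), force the form in Step 1 to be \(\le 9\). I would first try \(q=(p')^2\) directly. This gives energy \(\le\mu_1\int p'(t)^2(M^2-t^2)\,d\nu\), which is the Chebyshev operator \((1-t^2)\partial_t^2-t\partial_t\), with eigenvalues \(k^2\) on \(T_k\). That is exactly where the constant \(9\) should appear.

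\textbf{Main obstacle.} Bounding \(\int p'^2(M^2-t^2)\,d\nu\) by \(9\int p^2\,d\nu\) for all cubics is an identity for the arcsine law but not for a general \(\nu\). The hard step is therefore extracting enough information about \(\nu\) from convexity, possibly with an asymmetric range \(\min u\ne-\max u\), which requires the two-sided Kr\"oger comparison rather than the symmetric bound. If the plain polynomial space fails, I would replace \(t^k\) by \(T_k(\theta(t))\) for a reparametrization \(\theta\) adapted to the comparison ODE, so that Step 1 still reduces everything to a \(1\)D weighted Sturm--Liouville problem whose first nontrivial eigenfunction is linear. Optimality of \(9\) follows from thin cylinders \((0,L)\times\varepsilon B^{N-1}\) with \(\varepsilon\to0\): for \(\varepsilon\) small the first four eigenvalues are those of \((0,L)\).
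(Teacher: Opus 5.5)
You use the same test space as the paper, $\Span\{1,u,u^2,u^3\}$ with min--max, and your Step~1 identity $\int_\Omega p'(u)^2|\nabla u|^2\,dx=\mu_1\int_\Omega u\,Q(u)\,dx$ is correct. But the whole content of the theorem is in what you call the main obstacle, and the tool you propose cannot supply it.

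Your direct attempt with $q=(p')^2$ already fails for $p(t)=t$. It needs $\int_\Omega(M^2-u^2)\,dx\le 9\int_\Omega u^2\,dx$, i.e.\ $M^2|\Omega|\le 10\int_\Omega u^2\,dx$. On the unit ball of $\R^N$ the first eigenfunction is $u=f(r)x_1/r$ with $f\ge0$ increasing on $[0,1]$. Hence $M=f(1)$ and $\int_\Omega u^2\,dx\le M^2|\Omega|/N$, so the requirement is false for $N\ge 11$. This $u$ is odd, so your asymmetric refinement does not apply there.

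More fundamentally, your constraints do not force the bound you need. Using $\int t\,d\nu=0$, your family (for all $q\ge0$) is equivalent to the measure inequality $\Phi(s)\,ds\le (M^2-s^2)\,d\nu(s)$, where $\Phi(s)=\int_{t>s}t\,d\nu(t)$. Your target reads $\int p'(s)^2\Phi(s)\,ds\le 9\int p^2\,d\nu$. Here is a measure satisfying the first but violating the second:
\begin{itemize}
\item Take $M=1$, $0<b<1$, and let $\nu$ have unit atoms at $\pm b$.
\item Give it density $K(1-s^2)^{-1/2}$ on $(-b,b)$ and $\varepsilon(1-s^2)^{-1/2}$ on $b<|s|<1$, with $K=\varepsilon+b/\sqrt{1-b^2}$.
\item This $\nu$ is symmetric with support $[-1,1]$, and $\Phi=(1-s^2)\rho$ holds exactly off $\{\pm b\}$, so all your constraints hold. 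The atoms can be smeared into narrow bumps.
\item For $p(t)=t(t^2-b^2)$, as $\varepsilon\to0$ the ratio $\int p'^2\Phi\,ds\big/\int p^2\,d\nu$ tends to a number at least $(1-b^2)\cdot\frac{21}{2b^2}$, e.g.\ at least $31.5$ for $b=1/2$.
\end{itemize}
Your fallback does not help: with symmetric range the reparametrization adapted to the cosine model is $\theta(t)=t/M$, which gives polynomials again. So no argument using only $\nu$ plus a pointwise bound of $|\nabla u|$ in terms of $u$ can close the proof.

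The missing idea is to leave the level of $\nu$. With $g=|\nabla u|^2$, the paper uses two facts:
\begin{itemize}
\item the exact identity $\|\Delta P(u)\|_{L^2}^2=\lambda\int_\Omega P'(u)^2g\,dx+\int_\Omega P''(u)^2g^2\,dx$;
\item the bound $\int_\Omega|\nabla f|^2\,dx\le\|\Delta f\|\,\|f-\bar f\|$.
\end{itemize}
Together these reduce the theorem to $\int_\Omega q'(u)^2g^2\,dx\le 8\lambda\int_\Omega q(u)^2g\,dx$ for $\deg q\le 2$. The left side is not a functional of $\nu$. This inequality comes from integrated, weighted Bochner identities, where convexity enters only through $\partial_\nu g\le 0$ and the Hessian terms $|D^2u\,\nabla u|^2$ and $D^2u(\nabla u,\nabla u)$ are retained. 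These give the moment bounds $G\le5\lambda Q$ and $\lambda\alpha^2S_2\le\frac{24}{5}(5\lambda Q-G)$ for the measure $g^2\,dx$. A $2\times2$ matrix positivity argument for the adjoint operator $R$ then finishes.

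You would also need the approximation from smooth to general convex sets, since your gradient bound, like the paper's Bochner argument, requires a smooth boundary. Your sharpness example is fine.
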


\begin{remark}
The present paper proves only the cases \(k=2\) and \(k=3\) of \eqref{eq:general-conjecture}; no assertion is made here for \(k\ge4\).
\end{remark}

\subsection{Proof strategy}

The proof is based on a simple idea whose implementation is rather rigid:
we use a first Neumann eigenfunction to generate low-dimensional test
spaces, and then prove weighted one-dimensional inequalities strong enough
to recover the sharp constants.

Let \(u\) be a first nonconstant Neumann eigenfunction,
\[
  -\Delta u=\lambda u,\qquad \partial_\nu u=0,\qquad
  \lambda=\mu_1(\Omega).
\]
For the estimate on \(\mu_2\), the multiplicity case is immediate. When
\(\mu_1\) is simple, we consider a quadratic transform $v=u^2-cu-d$ chosen so that \(v\) is orthogonal to constants and to the first eigenspace.
The spectral theorem then gives
\[
  \|-\Delta v\|_{L^2(\Omega)}^2
  \ge
  \mu_2(\Omega)\int_\Omega |\nabla v|^2\,dx.
\]
The required upper bound for the left-hand side is reduced to the fourth-order estimate
\[
  \int_\Omega |\nabla u|^4\,dx
  \le
  \lambda^2\left(
    \int_\Omega u^4\,dx
    -\frac34
     \frac{\left(\int_\Omega u^3\,dx\right)^2}{\int_\Omega u^2\,dx}
  \right).
\]
This estimate follows from the integrated Bochner identity. Convexity enters
precisely through the boundary sign
\[
  \partial_\nu |\nabla u|^2\le0
  \qquad\text{on }\partial\Omega,
\]
which is the Neumann form of the nonnegativity of the second fundamental
form.

The proof of the estimate on \(\mu_3\) uses a different mechanism. Instead
of a single quadratic test function, we use the four-dimensional space $\mathcal V=\Span\{1,u,u^2,u^3\}$.
For \(f=P(u)\), where \(\deg P\le3\), one has the exact identity
\begin{equation}\label{eq:intro-laplacian-identity}
  \|-\Delta P(u)\|_{L^2(\Omega)}^2
  =
  \lambda\int_\Omega P'(u)^2|\nabla u|^2\,dx
  +
  \int_\Omega P''(u)^2|\nabla u|^4\,dx.
\end{equation}
Thus the desired bound follows from the weighted polynomial inequality
\begin{equation}\label{eq:intro-weighted-quadratic}
  \int_\Omega q'(u)^2|\nabla u|^4\,dx
  \le
  8\lambda\int_\Omega q(u)^2|\nabla u|^2\,dx,
  \qquad \deg q\le2.
\end{equation}
The proof of \eqref{eq:intro-weighted-quadratic} is the main new point in
the cubic argument. It uses a weighted Bochner--Reilly estimate, a centered
moment bound for the measure \(|\nabla u|^4\,dx\), and a two-by-two matrix
positivity argument for an adjoint first-order operator. Once
\eqref{eq:intro-weighted-quadratic} is available, \eqref{eq:intro-laplacian-identity}
gives
\[
  \|-\Delta P(u)\|_{L^2(\Omega)}^2
  \le
  9\lambda\int_\Omega |\nabla P(u)|^2\,dx,
\]
and the min--max principle applied to \(\mathcal V\) yields
\(\mu_3(\Omega)\le9\mu_1(\Omega)\).

The estimates are first proved for smooth bounded convex domains. General
bounded open convex sets are obtained by approximating convex bodies by
smooth strictly convex bodies and using spectral continuity of Neumann
eigenvalues in the convex class.

\subsection{Paper organization}

\Cref{sec:mu2} proves \Cref{thm:mu2} for smooth bounded convex domains.
\Cref{sec:mu3} proves \Cref{thm:mu3} in the smooth case. Finally,
\Cref{sec:approx-sharp} passes to arbitrary bounded open convex sets and
proves the sharpness of both constants.

\section{The second positive eigenvalue: the smooth case}\label{sec:mu2}

Throughout this section \(N\ge2\), \(\Omega\subset\R^N\) is a smooth bounded
convex domain, and \(u\in C^\infty(\overline\Omega)\) is a nonzero solution
of
\begin{equation}\label{eq:neumann-eigenproblem}
  -\Delta u=\lambda u\quad\text{in }\Omega,
  \qquad
  \partial_\nu u=0\quad\text{on }\partial\Omega,
\end{equation}
with \(\lambda>0\). Here \(\nu\) denotes the outward unit normal and \(D\) denotes the
Euclidean derivative. We write \(D_X V\) for the
standard directional derivative of a vector field \(V\) in the direction
\(X\) in \(\R^N\); in coordinates, \(D_XV=(X\cdot\nabla)V\). Thus
\[
  D^2u(X,Y)=\langle D_X\nabla u,Y\rangle
\]
is the Hessian written as a bilinear form. We also identify \(D^2u\) with
the corresponding symmetric linear map when writing expressions such as
\(D^2u\,\nabla u\). On \(\partial\Omega\), \(\nabla_Tu\) denotes the
tangential gradient. For tangent vector fields \(X,Y\) on \(\partial\Omega\),
we use the second fundamental form convention
\[
  \II(X,Y)=\langle D_X\nu,Y\rangle.
\]
With this convention the unit sphere has \(\II(X,X)=|X|^2\), and convexity
means that \(\II\) is positive semidefinite, written \(\II\succeq0\). Set
\[
  g=|\nabla u|^2.
\]
The identities used below are standard consequences of the Bochner formula
and of the boundary terms appearing in Reilly-type integral formulas; see
\cite{Reilly1977}. Since sign conventions for the second fundamental form
vary in the literature, we keep the above convention throughout. We include
the short weighted integration because the sharp constant in the subsequent
argument depends on it.

\begin{lemma}\label{lem:bochner-mu2}
Let
\[
  G=\int_\Omega g^2\,dx,
  \qquad
  J=\int_\Omega |D^2u\,\nabla u|^2\,dx.
\]
Then
\begin{equation}\label{eq:J-bound}
  J\le \frac{\lambda}{3}G.
\end{equation}
\end{lemma}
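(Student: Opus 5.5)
The plan is to multiply the Bochner identity by the weight \(g\) itself and integrate over \(\Omega\). Then \(J\) enters twice: once through an integration by parts of \(g\Delta g\), and once through a pointwise Cauchy--Schwarz lower bound for \(g|D^2u|^2\). Convexity enters only through the sign of a boundary term.

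\emph{Step 1: Bochner identity.} Since \(\Delta u=-\lambda u\), the Bochner formula gives, pointwise in \(\Omega\),
\[
  \tfrac12\Delta g=|D^2u|^2+\langle\nabla u,\nabla\Delta u\rangle=|D^2u|^2-\lambda g .
\]
We also have \(\nabla g=2\,D^2u\,\nabla u\). Hence \(|\nabla g|^2=4|D^2u\,\nabla u|^2\), and so \(J=\frac14\int_\Omega|\nabla g|^2\,dx\).

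\emph{Step 2: multiply by \(g\) and integrate by parts.} Green's formula gives
\[
  \int_\Omega g|D^2u|^2\,dx-\lambda G=\tfrac12\int_\Omega g\Delta g\,dx
  =\tfrac12\int_{\partial\Omega} g\,\partial_\nu g\,d\sigma-2J .
\]

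\emph{Step 3: the boundary sign.} I expect this to be the only delicate point, mainly because of the sign convention for \(\II\). On \(\partial\Omega\) the Neumann condition gives \(\nabla u=\nabla_Tu\). Therefore
\[
  \partial_\nu g=2\langle D^2u\,\nabla u,\nu\rangle=2D^2u(\nabla_Tu,\nu).
\]
Now differentiate the identity \(\langle\nabla u,\nu\rangle=0\) along the tangent field \(X=\nabla_Tu\). This yields \(D^2u(X,\nu)+\langle\nabla u,D_X\nu\rangle=0\). With the paper's convention \(\II(X,Y)=\langle D_X\nu,Y\rangle\), this reads
\[
  \partial_\nu g=-2\,\II(\nabla_Tu,\nabla_Tu)\le0,
\]
because \(\II\succeq0\) by convexity. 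Since \(g\ge0\), the boundary integral is nonpositive. We conclude
\[
  \int_\Omega g|D^2u|^2\,dx-\lambda G\le-2J .
\]

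\emph{Step 4: pointwise Kato-type bound.} By Cauchy--Schwarz, \(|D^2u\,\nabla u|\le\|D^2u\|_{\mathrm{op}}|\nabla u|\le|D^2u|\,|\nabla u|\). Hence \(g|D^2u|^2\ge|D^2u\,\nabla u|^2\) pointwise, and integrating gives \(\int_\Omega g|D^2u|^2\,dx\ge J\).

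\emph{Conclusion.} Inserting Step 4 into the inequality of Step 3 gives \(J-\lambda G\le-2J\), that is, \(J\le\frac{\lambda}{3}G\), which is \eqref{eq:J-bound}. All integrations are justified because \(u\in C^\infty(\overline\Omega)\) and \(\partial\Omega\) is smooth.
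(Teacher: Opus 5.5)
Your proof is correct and is essentially the paper's argument: you multiply the Bochner identity by \(g\), use the convexity sign \(\partial_\nu g=-2\II(\nabla_Tu,\nabla_Tu)\le0\) on the boundary, and apply the pointwise bound \(g|D^2u|^2\ge|D^2u\,\nabla u|^2\). Your inequality \(J-\lambda G\le-2J\) is just \(4J\le 2\lambda G-2J\) rearranged, which is exactly the paper's chain of inequalities.
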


\begin{proof}
If \(X\) is tangent to \(\partial\Omega\), differentiating the Neumann
condition tangentially gives
\[
  0=X(\partial_\nu u)=D^2u(X,\nu)+\langle \nabla u,D_X\nu\rangle.
\]
Since \(\nabla u\) is tangent to \(\partial\Omega\), taking
\(X=\nabla_Tu\) yields
\begin{equation}\label{eq:boundary-gradient-sign-mu2}
  \partial_\nu g
  =2D^2u(\nu,\nabla u)
  =-2\II(\nabla_Tu,\nabla_Tu)
  \le0
  \quad\text{on }\partial\Omega.
\end{equation}
The Bochner identity gives
\begin{equation}\label{eq:bochner-basic-mu2}
  \frac12\Delta g=|D^2u|^2-\lambda g.
\end{equation}
Since \(\nabla g=2D^2u\,\nabla u\), integration by parts, followed by
\eqref{eq:boundary-gradient-sign-mu2} and \eqref{eq:bochner-basic-mu2}, gives
\begin{align}
  4J
  &=\int_\Omega |\nabla g|^2\,dx \nonumber\\
  &=-\int_\Omega g\Delta g\,dx
    +\int_{\partial\Omega}g\partial_\nu g\,d\sigma \nonumber\\
  &\le 2\lambda G-2\int_\Omega g|D^2u|^2\,dx.
  \label{eq:integrated-bochner-mu2}
\end{align}
The pointwise inequality
\[
  |D^2u\,\nabla u|^2\le |D^2u|^2|\nabla u|^2=g|D^2u|^2
\]
implies
\[
  \int_\Omega g|D^2u|^2\,dx\ge J.
\]
Since this integral appears in \eqref{eq:integrated-bochner-mu2} with the
negative coefficient \(-2\), we obtain
\[
  4J\le 2\lambda G-2\int_\Omega g|D^2u|^2\,dx
  \le 2\lambda G-2J.
\]
Thus \eqref{eq:J-bound} follows.
\end{proof}
We next introduce moment identities and a fourth-order estimate.
\begin{lemma}\label{lem:fourth-order}
Set
\[
  S=\int_\Omega u^2\,dx,
  \qquad
  M_3=\int_\Omega u^3\,dx,
  \qquad
  M_4=\int_\Omega u^4\,dx.
\]
Then
\begin{equation}\label{eq:moment-identities-mu2}
  \int_\Omega g\,dx=\lambda S,
  \qquad
  \int_\Omega ug\,dx=\frac{\lambda}{2}M_3,
  \qquad
  \int_\Omega u^2g\,dx=\frac{\lambda}{3}M_4,
\end{equation}
and
\begin{equation}\label{eq:fourth-order-estimate}
  \int_\Omega |\nabla u|^4\,dx
  \le
  \lambda^2\left(
    M_4-\frac{3M_3^2}{4S}
  \right).
\end{equation}
\end{lemma}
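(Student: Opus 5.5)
The moment identities should follow from a chain-rule integration by parts, and the fourth-order estimate from a second integration by parts of \(\int_\Omega g^2\,dx\), combined with \Cref{lem:bochner-mu2} and a shifted Cauchy--Schwarz inequality.

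\emph{Moment identities.} For a polynomial \(\varphi\) with antiderivative \(\Phi\), I would write \(\varphi(u)g=\nabla\Phi(u)\cdot\nabla u\). Integrating by parts, the boundary term \(\int_{\partial\Omega}\Phi(u)\partial_\nu u\,d\sigma\) vanishes by the Neumann condition. This gives
\[
\int_\Omega \varphi(u)g\,dx=\int_\Omega\Phi(u)(-\Delta u)\,dx=\lambda\int_\Omega u\,\Phi(u)\,dx.
\]
Taking \(\varphi=1,u,u^2\) with \(\Phi=u,u^2/2,u^3/3\) yields the three identities in \eqref{eq:moment-identities-mu2}.

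\emph{First reduction of \(G=\int_\Omega g^2\,dx\).} I write \(g^2=g\,\nabla u\cdot\nabla u\) and integrate by parts against \(u\). The boundary term again vanishes because \(\partial_\nu u=0\), and we get
\[
G=-\int_\Omega u\,\mathrm{div}(g\nabla u)\,dx=\lambda\int_\Omega u^2g\,dx-\int_\Omega u\,\nabla g\cdot\nabla u\,dx
=\frac{\lambda^2}{3}M_4-\int_\Omega u\,\nabla g\cdot\nabla u\,dx .
\]

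\emph{Introducing a shift.} The same integration by parts gives \(\int_\Omega\nabla g\cdot\nabla u\,dx=\lambda\int_\Omega ug\,dx=\tfrac{\lambda^2}{2}M_3\). So for any constant \(c\),
\[
G=\frac{\lambda^2}{3}M_4+\frac{c\lambda^2}{2}M_3-\int_\Omega (u+c)\,\nabla g\cdot\nabla u\,dx .
\]
Since \(\nabla g=2D^2u\,\nabla u\), Cauchy--Schwarz and \Cref{lem:bochner-mu2} bound the last integral in absolute value by
\[
2\Bigl(\int_\Omega (u+c)^2g\,dx\Bigr)^{1/2}J^{1/2}\le 2\Bigl(\lambda Q_c\cdot\tfrac{\lambda}{3}G\Bigr)^{1/2},
\qquad Q_c=\tfrac{M_4}{3}+cM_3+c^2S.
\]
Here \(Q_c\) is computed from the moment identities. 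I choose \(c=-M_3/(2S)\), which minimizes \(Q_c\). For this choice \(Q:=Q_c=\tfrac{M_4}{3}-\tfrac{M_3^2}{4S}\), and the first two terms combine to exactly \(\lambda^2Q\). Note that \(Q\ge0\), since it equals \(\lambda^{-1}\int_\Omega(u+c)^2g\,dx\).

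\emph{Closing the quadratic inequality.} The result is \(G\le\lambda^2Q+\tfrac{2}{\sqrt3}\lambda\sqrt{Q}\sqrt{G}\). Put \(x=\sqrt G\) and \(a=\lambda\sqrt Q\). Then \(x^2-\tfrac{2}{\sqrt3}ax-a^2\le0\), which forces
\[
x\le a\Bigl(\tfrac1{\sqrt3}+\sqrt{\tfrac13+1}\Bigr)=\sqrt3\,a .
\]
Hence \(G\le 3\lambda^2Q=\lambda^2\bigl(M_4-\tfrac{3M_3^2}{4S}\bigr)\), which is \eqref{eq:fourth-order-estimate}.

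The main obstacle is identifying the right shift \(c\). Without it (\(c=0\)) the same argument only gives \(G\le\lambda^2M_4\), losing the \(M_3^2\) correction that the sharp constant needs. The check to do is that the optimal \(c\) for \(Q_c\) is simultaneously the value that makes the linear term \(\tfrac{c\lambda^2}{2}M_3\) recombine with \(\tfrac{\lambda^2}{3}M_4\) into \(\lambda^2Q\). That coincidence is what makes the quadratic inequality close with the clean factor \(3\).
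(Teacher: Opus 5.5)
Your proof is correct and is essentially the paper's argument. Your optimal shift $c=-M_3/(2S)$ is exactly the paper's centering at the $g$-weighted mean $\alpha=M_3/(2S)$, and your two integrations by parts combine into the paper's single identity from $\operatorname{div}\bigl((u-\alpha)g\nabla u\bigr)$; the Cauchy--Schwarz step with $J$, the use of \Cref{lem:bochner-mu2}, and the closing quadratic inequality are the same. The only differences are cosmetic: your $Q$ is $\lambda^{-1}$ times the paper's, and your quadratic argument covers the degenerate case $Q=0$ directly, whereas the paper rules it out separately.
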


\begin{proof}
Since \(u\not\equiv0\), we have \(S>0\). For \(m=1,2,3\), testing
\eqref{eq:neumann-eigenproblem} against \(u^m\) and using
\(\partial_\nu u=0\) gives
\[
  m\int_\Omega u^{m-1}|\nabla u|^2\,dx
  =\lambda\int_\Omega u^{m+1}\,dx,
\]
which proves \eqref{eq:moment-identities-mu2}.

Set
\[
  G=\int_\Omega g^2\,dx,
  \qquad
  \alpha=\frac{\int_\Omega ug\,dx}{\int_\Omega g\,dx}
  =\frac{M_3}{2S},
\]
and
\[
  Q=\int_\Omega (u-\alpha)^2g\,dx,
  \qquad
  B=\int_\Omega (u-\alpha)D^2u(\nabla u,\nabla u)\,dx,
  \qquad
  J=\int_\Omega |D^2u\,\nabla u|^2\,dx.
\]
By the definition of \(\alpha\),
\begin{equation}\label{eq:weighted-mean-zero-mu2}
  \int_\Omega (u-\alpha)g\,dx=0.
\end{equation}
The vector field \((u-\alpha)g\nabla u\) has zero normal component on
\(\partial\Omega\). Using \(\nabla g=2D^2u\,\nabla u\),
\(\Delta u=-\lambda u\), the divergence theorem, and
\eqref{eq:weighted-mean-zero-mu2}, we obtain
\begin{equation}\label{eq:G-B-Q-mu2}
  G=\lambda Q-2B.
\end{equation}
Moreover,
\begin{equation}\label{eq:B-cauchy-mu2}
  |B|\le \sqrt{QJ}
\end{equation}
by the Cauchy--Schwarz inequality. Combining \eqref{eq:G-B-Q-mu2},
\eqref{eq:B-cauchy-mu2}, and \cref{lem:bochner-mu2}, we find
\[
  G\le \lambda Q+2\sqrt{QJ}
  \le \lambda Q+2\sqrt{\frac{\lambda QG}{3}}.
\]
If \(Q=0\), then \eqref{eq:B-cauchy-mu2} gives \(B=0\), and
\eqref{eq:G-B-Q-mu2} gives \(G=0\), contradicting the nonconstancy of
\(u\). Thus \(Q>0\). With \(y=(G/(\lambda Q))^{1/2}\), the preceding
inequality becomes $y^2\le1+2y/\sqrt3$.
Since
\[
  y^2-\frac{2}{\sqrt3}y-1
  =\left(y-\sqrt3\right)\left(y+\frac1{\sqrt3}\right),
\]
we get \(y\le\sqrt3\), and therefore
\begin{equation}\label{eq:GQ-bound-mu2}
  G\le3\lambda Q.
\end{equation}
Finally, by \eqref{eq:moment-identities-mu2},
\[
  Q
  =\int_\Omega u^2g\,dx
  -\frac{\left(\int_\Omega ug\,dx\right)^2}{\int_\Omega g\,dx}
  =\lambda\left(\frac{M_4}{3}-\frac{M_3^2}{4S}\right).
\]
Substituting this into \eqref{eq:GQ-bound-mu2} proves
\eqref{eq:fourth-order-estimate}.
\end{proof}
The quadratic test function now gives the sharp estimate for \(\mu_2\) in
the smooth case.
\begin{proposition}\label{prop:mu2-smooth}
Let \(N\ge2\), and let \(\Omega\subset\R^N\) be a smooth bounded convex domain. Then $\mu_2(\Omega)\le4\mu_1(\Omega)$.
\end{proposition}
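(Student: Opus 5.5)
The plan is to split according to whether $\mu_1(\Omega)$ is simple, and in the simple case to test the spectral decomposition with a quadratic function of a first eigenfunction. If $\mu_1(\Omega)$ has multiplicity at least two, then $\mu_2(\Omega)=\mu_1(\Omega)\le 4\mu_1(\Omega)$ and there is nothing to prove. So assume $\mu_1$ is simple, let $u$ span its eigenspace, and put $\lambda=\mu_1(\Omega)$. Testing the equation against $1$ gives $\int_\Omega u\,dx=0$. We use the notation $S,M_3,M_4,G$ of \cref{lem:bochner-mu2} and \cref{lem:fourth-order}, and set
\[
  v=u^2-cu-d,\qquad c=\frac{M_3}{S},\qquad d=\frac{S}{|\Omega|}.
\]
The choice of $d$ gives $\int_\Omega v\,dx=0$. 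Since $\int_\Omega u\,dx=0$, the choice of $c$ gives $\int_\Omega vu\,dx=M_3-cS=0$. Hence $v$ lies in the closed span of eigenfunctions with eigenvalues $\mu_k$, $k\ge2$. Because $v\in C^\infty(\overline\Omega)$ with $\partial_\nu v=(2u-c)\partial_\nu u=0$, it is in the operator domain, and expanding $v=\sum_{k\ge2}a_k\phi_k$ gives
\[
  \|\Delta v\|_{L^2}^2=\sum_{k\ge2}\mu_k^2a_k^2\ \ge\ \mu_2\sum_{k\ge2}\mu_ka_k^2=\mu_2\int_\Omega|\nabla v|^2\,dx .
\]
We need $\int_\Omega|\nabla v|^2\,dx>0$. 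If it vanished, $v$ would be constant, so the continuous nonconstant function $u$ would take at most two values on the connected set $\Omega$, which is impossible.

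It then suffices to show $\|\Delta v\|^2\le 4\lambda\int_\Omega|\nabla v|^2\,dx$, and both sides are computed with the moment identities \eqref{eq:moment-identities-mu2}. For the Dirichlet energy,
\[
  \int_\Omega|\nabla v|^2\,dx=\int_\Omega(2u-c)^2g\,dx=\lambda\Bigl(\tfrac43M_4-2cM_3+c^2S\Bigr)=\lambda\Bigl(\tfrac43M_4-\tfrac{M_3^2}{S}\Bigr).
\]
For the Laplacian, $\Delta v=2g+(2u-c)\Delta u=2g-2\lambda u^2+c\lambda u$. Squaring, integrating, and again using \eqref{eq:moment-identities-mu2} together with $c=M_3/S$, the cross terms should collapse to
\[
  \|\Delta v\|_{L^2}^2=4G+\tfrac43\lambda^2M_4-\lambda^2\frac{M_3^2}{S}.
\]

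The only genuinely analytic input is the bound on $G$, and it is already supplied by the fourth-order estimate \eqref{eq:fourth-order-estimate} of \cref{lem:fourth-order}, namely $G\le\lambda^2\bigl(M_4-\tfrac{3M_3^2}{4S}\bigr)$. Substituting it gives
\[
  \|\Delta v\|^2\le\lambda^2\Bigl(\tfrac{16}{3}M_4-4\frac{M_3^2}{S}\Bigr)=4\lambda\int_\Omega|\nabla v|^2\,dx .
\]
Combined with the spectral inequality, this yields $\mu_2\le4\lambda$. The constants match exactly, with no slack, so the main thing to get right is the bookkeeping in expanding $\|\Delta v\|^2$. In particular, the coefficient of $M_3^2/S$ is $1+2-4=-1$ and that of $M_4$ is $4-\tfrac83=\tfrac43$.
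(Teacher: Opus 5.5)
Your proposal is correct and follows the paper's proof essentially line for line. You use the same quadratic test function $v=u^2-cu-d$ with $c=M_3/S$ and $d=S/|\Omega|$, the same spectral inequality $\|\Delta v\|_{L^2}^2\ge\mu_2\int_\Omega|\nabla v|^2\,dx$, the same moment expansions giving $\int_\Omega|\nabla v|^2\,dx=\lambda\bigl(\tfrac43M_4-M_3^2/S\bigr)$ and $\|\Delta v\|_{L^2}^2=4G+\lambda^2\bigl(\tfrac43M_4-M_3^2/S\bigr)$ (your cross-term bookkeeping checks out), and the same closing appeal to the fourth-order estimate of \cref{lem:fourth-order}.
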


\begin{proof}
Set \(\lambda=\mu_1(\Omega)\). If \(\lambda\) has multiplicity at least
\(2\), then \(\mu_2(\Omega)=\mu_1(\Omega)\), and the assertion is immediate.
Assume that \(\lambda\) is simple. Let \(u\) be a real first Neumann
eigenfunction satisfying \eqref{eq:neumann-eigenproblem}. Integrating the
equation gives \(\int_\Omega u\,dx=0\). Keep the notation
\(S,M_3,M_4,g\) from \cref{lem:fourth-order}. Define
\[
  c=\frac{M_3}{S},
  \qquad
  d=\frac{S}{|\Omega|},
  \qquad
  v=u^2-cu-d.
\]
Then
\begin{equation}\label{eq:v-orthogonality}
  \int_\Omega v\,dx=0,
  \qquad
  \int_\Omega uv\,dx=0.
\end{equation}
Moreover, \(v\not\equiv0\). Indeed, if \(v\equiv0\), then the continuous
function \(u\) takes values only among the roots of \(t^2-ct-d\). Since
\(\Omega\) is connected, \(u\) would be constant, a contradiction. Thus
\begin{equation}\label{eq:v-positive-energy}
  \int_\Omega |\nabla v|^2\,dx>0.
\end{equation}

Let \(A=-\Delta_{\mathrm N}\) be the self-adjoint Neumann Laplacian on
\(L^2(\Omega)\), and denote its operator domain by \(\operatorname{Dom}(A)\). Since
\(v\in C^\infty(\overline\Omega)\) and
\[
  \partial_\nu v=(2u-c)\partial_\nu u=0,
\]
we have \(v\in\operatorname{Dom}(A)\). By \eqref{eq:v-orthogonality} and
simplicity of \(\lambda\), the function \(v\) is orthogonal to the
eigenspaces corresponding to \(\mu_0\) and \(\mu_1\). The spectral theorem
therefore gives
\begin{equation}\label{eq:second-order-spectral-quotient}
  \|Av\|_{L^2(\Omega)}^2
  \ge
  \mu_2(\Omega)\int_\Omega |\nabla v|^2\,dx.
\end{equation}
Set $R_0=4M_4/3-M_3^2/S$. Since \(\nabla v=(2u-c)\nabla u\), the moment identities
\eqref{eq:moment-identities-mu2} give
\begin{equation}\label{eq:v-energy}
  \int_\Omega |\nabla v|^2\,dx
  =\lambda\left(\frac43M_4-\frac{M_3^2}{S}\right)
  =\lambda R_0.
\end{equation}
In particular, \(R_0>0\). Using \eqref{eq:neumann-eigenproblem}, $Av=2\lambda u^2-2g-\lambda cu$.
Expanding the square and using \eqref{eq:moment-identities-mu2}, we obtain
\begin{equation}\label{eq:Av-norm}
  \|Av\|_{L^2(\Omega)}^2=4\int_\Omega g^2\,dx+\lambda^2R_0.
\end{equation}
The fourth-order estimate \eqref{eq:fourth-order-estimate} gives
\[
  4\int_\Omega g^2\,dx
  \le 4\lambda^2M_4-3\lambda^2\frac{M_3^2}{S}
  =3\lambda^2R_0.
\]
Together with \eqref{eq:Av-norm} and \eqref{eq:v-energy}, this gives
\[
  \|Av\|_{L^2(\Omega)}^2
  \le 4\lambda^2R_0
  =4\lambda\int_\Omega |\nabla v|^2\,dx.
\]
Combining this with \eqref{eq:second-order-spectral-quotient} and
\eqref{eq:v-positive-energy} yields $\mu_2(\Omega)\le4\lambda=4\mu_1(\Omega)$.
\end{proof}

\section{The third positive eigenvalue: the smooth case}\label{sec:mu3}

Throughout this section \(N\ge2\), \(\Omega\subset\R^N\) is a smooth bounded
convex domain, \(\lambda=\mu_1(\Omega)\), and \(u\) is a real first
nonconstant Neumann eigenfunction:
\[
  -\Delta u=\lambda u\quad\text{in }\Omega,
  \qquad
  \partial_\nu u=0\quad\text{on }\partial\Omega.
\]
Set
\[
  g=|\nabla u|^2,
  \qquad
  M=D^2u,
  \qquad
  W=M(\nabla u,\nabla u).
\]
On \(\{g>0\}\), put \(a=2W/g\), and put \(a=0\) on \(\{g=0\}\). Since
\(W=0\) whenever \(g=0\), we have everywhere in \(\Omega\)
\begin{equation}\label{eq:a-identities}
  ag=2W=\nabla g\cdot\nabla u.
\end{equation}
Moreover, on \(\{g>0\}\),
\[
  a^2g=\frac{4W^2}{g}\le4|M\nabla u|^2,
\]
and both sides vanish on \(\{g=0\}\). Also \(|a|\le2|M|\) on
\(\{g>0\}\), so this extension of \(a\) is bounded and measurable. No
derivative of \(a\) will be used.

For a measurable function \(F\), write
\[
  \|F\|_{L^2(g)}^2=\int_\Omega F^2g\,dx.
\]
The next identity is the integration-by-parts form used in the weighted
polynomial estimate. It may be viewed as an adjoint relation for the
first-order operation \(q\mapsto q'(u)\) with respect to the weights
\(g^2\,dx\) and \(g\,dx\).

\begin{lemma}\label{lem:adjoint}
Let \(q\) be a polynomial and let \(h\) be an affine function. Define
\[
  Rh:=\lambda u\,h(u)-g\,h'(u)-a\,h(u).
\]
Then
\begin{equation}\label{eq:adjoint}
  \int_\Omega q'(u)h(u)g^2\,dx
  =\int_\Omega q(u)Rh\,g\,dx.
\end{equation}
\end{lemma}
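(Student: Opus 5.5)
The identity is an integration by parts against the vector field $X = q(u)\,h(u)\,g\,\nabla u$. This field has zero normal component on $\partial\Omega$ because $\partial_\nu u=0$, so $\int_\Omega \operatorname{div} X\,dx=0$. The plan is to expand $\operatorname{div}X$ and read off \eqref{eq:adjoint}.

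\textbf{Expanding the divergence.} Using $\Delta u=-\lambda u$ and $\nabla g\cdot\nabla u=ag$ from \eqref{eq:a-identities},
\[
  \operatorname{div}X
  = q'(u)h(u)g^2 + q(u)h'(u)g^2 + q(u)h(u)\,ag - \lambda u\,q(u)h(u)g .
\]
The four terms come, in order, from differentiating $q(u)$, from differentiating $h(u)$, from $\nabla g\cdot\nabla u$, and from $\Delta u$.

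\textbf{Rearranging.} Setting the integral of this expression to zero gives
\[
  \int_\Omega q'(u)h(u)g^2\,dx=\int_\Omega q(u)\bigl(\lambda u\,h(u)-g\,h'(u)-a\,h(u)\bigr)g\,dx ,
\]
which is exactly $\int_\Omega q(u)\,Rh\,g\,dx$, i.e.\ \eqref{eq:adjoint}.

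\textbf{Justification.} The functions $u$, $g$, $q(u)$ and $h(u)$ are smooth up to the boundary, so the divergence theorem applies to $X$. The only subtle point is that $a$ is defined separately on $\{g=0\}$, so I will avoid differentiating $a$ altogether. The field $X$ involves only $g$ and $u$, so $\operatorname{div}X$ is computed classically. The quantity $a$ enters only through the pointwise identity $\nabla g\cdot\nabla u = ag$, which holds everywhere by \eqref{eq:a-identities}, including on $\{g=0\}$ where both sides vanish. Since $a$ is bounded and measurable, all integrals are finite.

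Affineness of $h$ is not needed for the identity itself; it presumably matters only later, where $h'$ is constant. I expect no real obstacle beyond the bookkeeping of signs.
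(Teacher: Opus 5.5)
Your proposal is correct and is essentially the paper's own proof: both apply the divergence theorem to $q(u)h(u)g\nabla u$, expand using $\Delta u=-\lambda u$ and $\nabla g\cdot\nabla u=ag$, and rearrange. Your side remarks also hold: $a$ is never differentiated, and the identity does not actually need $h$ to be affine.
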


\begin{proof}
The vector field \(q(u)h(u)g\nabla u\) has zero normal component on
\(\partial\Omega\). Hence
\[
  0=\int_\Omega \operatorname{div}\bigl(q(u)h(u)g\nabla u\bigr)\,dx.
\]
Expanding the divergence gives
\begin{align*}
  0
  &=\int_\Omega \bigl(q'(u)h(u)g^2+q(u)h'(u)g^2
     +q(u)h(u)\nabla g\cdot\nabla u
     +q(u)h(u)g\Delta u\bigr)\,dx \\
  &=\int_\Omega \bigl(q'(u)h(u)g^2
     +q(u)\bigl(h'(u)g^2+a h(u)g-\lambda u h(u)g\bigr)\bigr)\,dx,
\end{align*}
where we used \(\Delta u=-\lambda u\) and \eqref{eq:a-identities}. Moving
the term containing \(q'h\) to the other side gives
\[
  \int_\Omega q'(u)h(u)g^2\,dx
  =\int_\Omega q(u)\bigl(\lambda u h(u)-g h'(u)-a h(u)\bigr)g\,dx,
\]
which is \eqref{eq:adjoint}.
\end{proof}

For the proof of the estimate for \(\mu_3\), we need the following weighted
consequences of the Bochner identity. Convexity enters through the sign of
the Neumann boundary term after integration by parts. We include the
derivation because the precise constants are used in the matrix argument
below.
\begin{lemma}\label{lem:weighted-bochner-mu3}
The following identities and inequalities hold:
\begin{equation}\label{eq:bochner-mu3}
  \frac12\Delta g=|M|^2-\lambda g,
\end{equation}
\begin{equation}\label{eq:boundary-g-mu3}
  \partial_\nu g\le0\quad\text{on }\partial\Omega,
\end{equation}
and consequently
\begin{equation}\label{eq:bochner-weighted}
  \int_\Omega g^2|M|^2\,dx
  +4\int_\Omega g|M\nabla u|^2\,dx
  \le \lambda\int_\Omega g^3\,dx.
\end{equation}
Moreover, if \(h\) is affine, then
\begin{equation}\label{eq:bochner-h}
  \int_\Omega h(u)^2g|M|^2\,dx
  +2\int_\Omega h(u)^2|M\nabla u|^2\,dx
  \le
  \lambda\int_\Omega h(u)^2g^2\,dx
  -h'\int_\Omega h(u)a g^2\,dx.
\end{equation}
\end{lemma}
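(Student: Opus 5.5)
The plan is to prove the four assertions in order. Each weighted inequality comes from multiplying the Bochner identity by a suitable nonnegative weight, integrating by parts once, and discarding the boundary term using the convexity sign. This is exactly the mechanism of \cref{lem:bochner-mu2}, with heavier weights.

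\textbf{Pointwise facts.} For \eqref{eq:bochner-mu3} I would use the standard Euclidean Bochner formula
\[
  \tfrac12\Delta|\nabla u|^2=|D^2u|^2+\nabla u\cdot\nabla\Delta u .
\]
Inserting \(\Delta u=-\lambda u\) gives \(\tfrac12\Delta g=|M|^2-\lambda g\), which is the same computation as \eqref{eq:bochner-basic-mu2}. The boundary sign \eqref{eq:boundary-g-mu3} is literally \eqref{eq:boundary-gradient-sign-mu2}. Indeed, differentiating \(\partial_\nu u=0\) tangentially along \(\nabla_T u=\nabla u\) gives \(\partial_\nu g=-2\II(\nabla_Tu,\nabla_Tu)\le0\), since \(\II\succeq0\) on a convex domain.

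\textbf{Proof of \eqref{eq:bochner-weighted}.} I would multiply \eqref{eq:bochner-mu3} by \(g^2\) and integrate. Green's formula gives
\[
  \tfrac12\int_\Omega g^2\Delta g\,dx
  =-\tfrac12\int_\Omega \nabla(g^2)\cdot\nabla g\,dx+\tfrac12\int_{\partial\Omega}g^2\partial_\nu g\,d\sigma .
\]
The boundary integral is \(\le0\) by \eqref{eq:boundary-g-mu3}. Since \(\nabla g=2M\nabla u\), the interior term is
\[
  -\int_\Omega g|\nabla g|^2\,dx=-4\int_\Omega g|M\nabla u|^2\,dx .
\]
Hence
\[
  \int_\Omega g^2\bigl(|M|^2-\lambda g\bigr)\,dx\le-4\int_\Omega g|M\nabla u|^2\,dx,
\]
which is \eqref{eq:bochner-weighted} after rearranging.

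\textbf{Proof of \eqref{eq:bochner-h}.} Here I would use the weight \(h(u)^2g\), which is nonnegative, so the boundary term \(\tfrac12\int_{\partial\Omega}h(u)^2g\,\partial_\nu g\,d\sigma\) is again \(\le0\). The interior term is \(-\tfrac12\int_\Omega\nabla\bigl(h(u)^2g\bigr)\cdot\nabla g\,dx\). Expanding \(\nabla\bigl(h(u)^2g\bigr)=h(u)^2\nabla g+2h(u)h'g\nabla u\), where \(h'\) is a constant because \(h\) is affine, gives two pieces:
\[
  -\tfrac12\int_\Omega h(u)^2|\nabla g|^2\,dx=-2\int_\Omega h(u)^2|M\nabla u|^2\,dx,
\]
\[
  -h'\int_\Omega h(u)\,g\,\nabla u\cdot\nabla g\,dx=-h'\int_\Omega h(u)\,a\,g^2\,dx,
\]
where the second uses \eqref{eq:a-identities}. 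Equating with \(\int_\Omega h(u)^2g(|M|^2-\lambda g)\,dx\) and rearranging yields \eqref{eq:bochner-h}.

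\textbf{Main obstacle.} There is no real obstacle, since everything is smooth up to the boundary. The one point needing care is that \(a\) may be discontinuous where \(g=0\). This is harmless because \(a\) is never differentiated: it enters only through the pointwise identity \(ag=\nabla g\cdot\nabla u\), which holds everywhere. The other thing to check is that each weight is nonnegative, since that is what lets the convexity sign discard the boundary term.
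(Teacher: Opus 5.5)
Your proof is correct and follows the paper's argument essentially step for step. You obtain the pointwise Bochner identity from $\Delta u=-\lambda u$ and the boundary sign by tangentially differentiating the Neumann condition along $\nabla_T u$. You then integrate by parts against the nonnegative weights $g^2$ and $h(u)^2g$, discard the boundary term by convexity, and use $\nabla g=2M\nabla u$ and $ag=\nabla g\cdot\nabla u$.
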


\begin{proof}
The Bochner identity \eqref{eq:bochner-mu3} follows from
\[
  \frac12\Delta|\nabla u|^2=|D^2u|^2+\nabla u\cdot\nabla\Delta u
\]
and \(\Delta u=-\lambda u\).

On \(\partial\Omega\), the Neumann condition gives \(\nabla u=\nabla_Tu\).
If \(\tau\) is tangent to \(\partial\Omega\), differentiating
\(\nabla u\cdot\nu=0\) in the direction \(\tau\) gives
\[
  D^2u(\tau,\nu)+\II(\tau,\nabla_Tu)=0.
\]
Taking \(\tau=\nabla_Tu\), we get
\[
  \partial_\nu g=2D^2u(\nu,\nabla u)
  =-2\II(\nabla_Tu,\nabla_Tu)\le0,
\]
which proves \eqref{eq:boundary-g-mu3}.

Multiplying \eqref{eq:bochner-mu3} by \(g^2\), integrating by parts, and
using \eqref{eq:boundary-g-mu3}, we find
\begin{align*}
  \int_\Omega g^2|M|^2\,dx-
  \lambda\int_\Omega g^3\,dx
  &=\frac12\int_\Omega g^2\Delta g\,dx \\
  &\le -\int_\Omega g|\nabla g|^2\,dx
  =-4\int_\Omega g|M\nabla u|^2\,dx,
\end{align*}
which is \eqref{eq:bochner-weighted}.

Finally let \(h\) be affine. Multiplying \eqref{eq:bochner-mu3} by
\(h(u)^2g\), integrating by parts, and again using
\eqref{eq:boundary-g-mu3}, we obtain
\begin{align*}
  \int_\Omega h(u)^2g|M|^2\,dx
  -\lambda\int_\Omega h(u)^2g^2\,dx
  &=\frac12\int_\Omega h(u)^2g\Delta g\,dx \\
  &\le -\frac12\int_\Omega \nabla\bigl(h(u)^2g\bigr)\cdot\nabla g\,dx \\
  &=-h'\int_\Omega h(u)ag^2\,dx
    -2\int_\Omega h(u)^2|M\nabla u|^2\,dx,
\end{align*}
where we used \(\nabla g=2M\nabla u\) and
\(\nabla g\cdot\nabla u=ag\). Rearranging proves \eqref{eq:bochner-h}.
\end{proof}

The key estimate is the following weighted polynomial inequality.
\begin{proposition}\label{prop:weighted}
For every polynomial \(q\) of degree at most \(2\),
\begin{equation}\label{eq:weighted-quadratic}
  \int_\Omega q'(u)^2g^2\,dx
  \le
  8\lambda\int_\Omega q(u)^2g\,dx.
\end{equation}
\end{proposition}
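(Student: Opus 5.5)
Write $q'(u)=h(u)$ with $h$ affine. By \cref{lem:adjoint} and Cauchy--Schwarz in $L^2(g)$,
\[
  \int_\Omega h(u)^2g^2\,dx=\int_\Omega q(u)\,Rh\,g\,dx\le \|q(u)\|_{L^2(g)}\,\|Rh\|_{L^2(g)} .
\]
It therefore suffices to prove the adjoint bound
\begin{equation*}
  \|Rh\|_{L^2(g)}^2\le 8\lambda\int_\Omega h(u)^2g^2\,dx
  \qquad\text{for every affine } h,
\end{equation*}
because then $X^2\le \|q\|\sqrt{8\lambda}\,X$ with $X^2=\int h^2g^2$, which is \eqref{eq:weighted-quadratic}. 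The inequality is homogeneous of degree two in $h$ and $h$ ranges over a two-dimensional space. I would parametrize $h(t)=s(t-\beta)+r$, with the center $\beta$ chosen as the weighted mean of $u$ for the measure $g^2\,dx$, in the spirit of the centering by $\alpha$ in \cref{lem:fourth-order}. Both sides then become quadratic forms in $(r,s)$, and the claim is that the $2\times2$ matrix $8\lambda\,\mathrm{LHS}-\mathrm{RHS}$ is positive semidefinite. The plan is to check its diagonal entries and its determinant.

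To compute $\|Rh\|_{L^2(g)}^2$, expand
\[
  (\lambda u h-gh'-ah)^2g .
\]
The pure terms involve $\int u^2h^2 g$, $\int g^3$ and $\int a^2h^2 g$. The cross terms are handled as follows.
\begin{itemize}
  \item The cross term $\int u h\,a h\, g=\tfrac12\int uh^2\,\nabla g\cdot\nabla u$ is integrated by parts, using that the normal component vanishes and $\Delta u=-\lambda u$. This converts it into the moments $\int h^2g^2$, $\int uhh'g^2$ and $\lambda\int u^2h^2g$.
  \item The cross term $\int g^2h'\,ah$ is exactly the one appearing on the right of \eqref{eq:bochner-h}.
  \item The term $\int a^2h^2g$ is bounded by $4\int h^2|M\nabla u|^2$, using $a^2g\le4|M\nabla u|^2$.
  \item Terms like $\lambda^2\int u^2h^2 g$ are treated by applying \cref{lem:adjoint} again with $q=u h$ or similar polynomials. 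This trades $g$-weighted moments for $g^2$-weighted ones, much as the identities \eqref{eq:moment-identities-mu2} did in \cref{lem:fourth-order}.
\end{itemize}
After these substitutions, \eqref{eq:bochner-h}, with $|M|^2\ge0$ dropped or used via $g|M|^2\ge|M\nabla u|^2$, controls the Hessian terms $\int h^2|M\nabla u|^2$ together with the $h'\int h a g^2$ term. The $s^2$ coefficient needs $\int g^3$ against $\lambda\int(u-\beta)^2g^2$, and for this I would use \eqref{eq:bochner-weighted} together with a centered moment bound of the type $\int g^3\le C\lambda\int (u-\beta)^2g^2$. I would prove that bound by repeating the proof of \eqref{eq:GQ-bound-mu2}, now with the weight $g^2$: write
\[
  \int g^3=\lambda\int(u-\beta)g^2-\ldots
\]
by the divergence of $(u-\beta)g^2\nabla u$, bound the Hessian term by Cauchy--Schwarz against \eqref{eq:bochner-weighted}, and solve the resulting quadratic inequality in $\sqrt{\int g^3/(\lambda\int(u-\beta)^2g^2)}$. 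I expect a constant like $C=5$ or so, by analogy with the $3$ obtained earlier.

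The main obstacle is the final $2\times2$ positivity: the constant $8$ is sharp (an interval is extremal), so every Cauchy--Schwarz step must be tight simultaneously on the one-dimensional model. The choice of $\beta$ and the weights used in absorbing the cross terms (Young's inequality with an optimized parameter) must be tuned so that the off-diagonal entry cancels exactly or the determinant condition reduces to a quadratic inequality like $y^2\le1+2y/\sqrt3$ in \cref{lem:fourth-order}. If the direct determinant fails, I would instead keep $\int h^2 g|M|^2$ from \eqref{eq:bochner-h} rather than discarding it, and allocate it between the $r^2$ and $s^2$ entries. I would test every step on $u=\cos(\pi x/L)$ to confirm that equality is preserved.
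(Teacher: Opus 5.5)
There is a genuine gap. Your reduction to the affine bound $\|Rh\|_{L^2(g)}^2\le8\lambda\int_\Omega h(u)^2g^2\,dx$ via \cref{lem:adjoint} and Cauchy--Schwarz is exactly the paper's. So are several of your other ingredients:
the centering at the $g^2\,dx$-mean $\alpha$ of $u$ (your $\beta$), with $\eta=u-\alpha$;
the use of \eqref{eq:bochner-h} with $a^2g\le4|M\nabla u|^2$ to bound $V_h:=\int_\Omega a^2h(u)^2g\,dx$ from above;
and the bound $G=\int_\Omega g^3\,dx\le5\lambda Q$, where $Q=\int_\Omega\eta^2g^2\,dx$. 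Your Cauchy--Schwarz argument, with $g|M|^2\ge|M\nabla u|^2$ inserted in \eqref{eq:bochner-weighted}, gives exactly $5$.

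The first gap is the term $U_h:=\lambda^2\int_\Omega u^2h(u)^2g\,dx$. Applying \cref{lem:adjoint} with $q=uh$ reproduces precisely the identity of your first bullet,
\[
  K_h:=\lambda\int_\Omega u\,a\,h(u)^2g\,dx
  =U_h-\lambda\Bigl(\int_\Omega h(u)^2g^2\,dx+2h'\int_\Omega u\,h(u)g^2\,dx\Bigr).
\]
This is a single linear relation between $U_h$ and $K_h$, and it cannot remove both. After using it, together with the divergence identity for $hg^2\nabla u$, you get
\[
  \|Rh\|_{L^2(g)}^2=-U_h+V_h+2\lambda\int_\Omega h(u)^2g^2\,dx+3\lambda h'\int_\Omega u\,h(u)g^2\,dx .
\]
So $U_h$ enters with a minus sign and needs a sharp lower bound. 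The trivial bound $U_h\ge0$ is fatal. For $u=\cos x$ on $(0,\pi)$ (so $\lambda=1$; equivalently thin boxes) and $h(t)=t$, the affine bound is an equality, $\|Rh\|^2=8\int h^2g^2=\pi/2$, while $U_h=\pi/16$.

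The missing idea is the positivity of the Gram matrix of the pair $(\lambda u\,h(u),\,a\,h(u))$ in $L^2(g)$, used in the form
\[
  0\le\int_\Omega\bigl(\lambda u\,h(u)+\tfrac12a\,h(u)\bigr)^2g\,dx=U_h+K_h+\tfrac14V_h .
\]
Combined with the identity for $K_h$, this gives $U_h\ge\frac{\lambda}{2}\bigl(\int_\Omega h^2g^2\,dx+2h'\int_\Omega uhg^2\,dx\bigr)-\frac18V_h$. This is the paper's $\mathbf U\succeq\frac12\mathbf D-\frac18\mathbf B$. The weight $\frac12$ is forced, since $a=-2\lambda u$ in the one-dimensional model. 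Your remark about Young's inequality points this way. As written, though, your plan eliminates $K_h$ exactly and then tries to dispose of $U_h$ by the same identity, which is circular.

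The second gap remains even after this fix. Write $h=s\eta+r$ and $S_2=\int_\Omega g^2\,dx$. The resulting $2\times2$ form (the paper's $\mathbf L$) has diagonal entries $\frac34(5\lambda Q-G)$ and $5\lambda S_2$. Its off-diagonal entry is $-\frac58\lambda\alpha S_2$, which comes from $\int_\Omega u\,h(u)g^2\,dx=sQ+\alpha rS_2$. The determinant is nonnegative only if $\lambda\alpha^2S_2\le\frac{48}{5}(5\lambda Q-G)$. Since $5\lambda Q-G$ vanishes on the interval, you need a bound forcing $\alpha$ to be small when $G$ is near $5\lambda Q$. Cauchy--Schwarz against $\eta g$ alone gives no information about $\alpha$.

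The paper gets both moment bounds at once from Bessel's inequality for $W=M(\nabla u,\nabla u)$ against the orthonormal pair $g/\sqrt{S_2}$, $\eta g/\sqrt Q$. It uses $\int_\Omega gW\,dx=\lambda\alpha S_2/4$, $\int_\Omega\eta gW\,dx=(\lambda Q-G)/4$ and $\int_\Omega W^2\,dx\le\lambda G/5$. This yields $\lambda\alpha^2S_2\le\frac{24}{5}(5\lambda Q-G)$ (\cref{lem:moments-mu3}), which is what closes the determinant.
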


We prove \cref{prop:weighted} in two steps. Since \(u\) is nonconstant,
\[
  S_2=\int_\Omega g^2\,dx>0.
\]
Set
\begin{equation}\label{eq:moment-notation-mu3}
  \alpha=\frac{\int_\Omega ug^2\,dx}{S_2},
  \qquad
  \eta=u-\alpha,
  \qquad
  Q=\int_\Omega \eta^2g^2\,dx,
  \qquad
  G=\int_\Omega g^3\,dx.
\end{equation}
Then \(\int_\Omega \eta g^2\,dx=0\). The first step in proving the
weighted polynomial estimate is to control the centered moments of the
measure \(g^2\,dx\).

\begin{lemma}\label{lem:moments-mu3}
With the notation \eqref{eq:moment-notation-mu3}, one has \(Q>0\),
\(G>0\),
\begin{equation}\label{eq:G-bound-mu3}
  G\le5\lambda Q,
\end{equation}
and
\begin{equation}\label{eq:alpha-bound-mu3}
  \lambda\alpha^2S_2
  \le
  \frac{24}{5}\bigl(5\lambda Q-G\bigr).
\end{equation}
\end{lemma}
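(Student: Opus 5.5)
The plan is to copy the mechanism of \cref{lem:fourth-order}, now with the weight \(g^2\) in place of \(g\), and to use \eqref{eq:bochner-weighted} in place of \cref{lem:bochner-mu2}.

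\textbf{Bounding \(J\).} Put
\[
  J_1=\int_\Omega g|M\nabla u|^2\,dx .
\]
Pointwise, \(g|M\nabla u|^2\le g^2|M|^2\). Inserting this into \eqref{eq:bochner-weighted} gives \(5J_1\le\lambda G\). Also \(W^2\le g|M\nabla u|^2\), so
\[
  \int_\Omega W^2\,dx\le J_1\le \frac{\lambda G}{5}.
\]

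\textbf{Two divergence identities.} Both vector fields below have zero normal component on \(\partial\Omega\). We use \(\nabla(g^2)\cdot\nabla u=4gW\) and \(\Delta u=-\lambda u\).
\begin{itemize}
\item Integrating \(\operatorname{div}(\eta g^2\nabla u)\), and using \(\int_\Omega u\eta g^2\,dx=Q\) (this holds because \(\int_\Omega\eta g^2\,dx=0\)), gives
\[
  G=\lambda Q-4\int_\Omega \eta gW\,dx .
\]
\item Integrating \(\operatorname{div}(g^2\nabla u)\) gives
\[
  \lambda\alpha S_2=4\int_\Omega gW\,dx .
\]
\end{itemize}

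\textbf{Proof of \eqref{eq:G-bound-mu3}.} Cauchy--Schwarz gives
\[
  \Bigl|\int_\Omega\eta gW\,dx\Bigr|\le\sqrt{Q\cdot\lambda G/5},
\]
hence \(G\le\lambda Q+4\sqrt{\lambda QG/5}\).

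First, \(G>0\) because \(u\) is nonconstant and smooth. If \(Q=0\), the first identity would force \(G=0\), a contradiction; so \(Q>0\).

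Set \(y=\sqrt{G/(\lambda Q)}\). Then \(y^2-\tfrac{4}{\sqrt5}y-1\le0\). This quadratic factors as
\[
  y^2-\tfrac{4}{\sqrt5}y-1=(y-\sqrt5)\bigl(y+1/\sqrt5\bigr),
\]
so \(y\le\sqrt5\), which is \eqref{eq:G-bound-mu3}.

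\textbf{Proof of \eqref{eq:alpha-bound-mu3}.} Combine the two identities with a free parameter \(t\in\R\):
\[
  4\int_\Omega(1-t\eta)gW\,dx=\lambda\alpha S_2+t(G-\lambda Q).
\]
Apply Cauchy--Schwarz with the bound on \(\int_\Omega W^2\,dx\), and use \(\int_\Omega(1-t\eta)^2g^2\,dx=S_2+t^2Q\) (the cross term vanishes since \(\eta\) is \(g^2\)-centered). This gives
\[
  \bigl(\lambda\alpha S_2+t(G-\lambda Q)\bigr)^2\le\frac{16\lambda G}{5}\,(S_2+t^2Q)\qquad\text{for all }t.
\]
Write \(D=5\lambda Q-G\ge0\), so that \(G-\lambda Q=4\lambda Q-D\). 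Choose \(t\) to make the left side as small as possible relative to the right; I would first try \(t=-\lambda\alpha S_2/(4\lambda Q)\), or simply optimize the quadratic in \(t\) directly. In the extremal case \(D=0\) the inequality should force \(\alpha=0\). For general \(D\), the deficit should come out linear in \(D\), giving \(\lambda\alpha^2S_2\le\frac{24}{5}D\).

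\textbf{Main obstacle.} The algebra of this last optimization is where I expect the difficulty. The target constant \(24/5\) has to emerge after substituting \(G=5\lambda Q-D\) and dropping favorable terms such as \(D^2\). If the single Cauchy--Schwarz step turns out to be too lossy, the fallback is to keep the unused slack in \eqref{eq:bochner-weighted}, namely the term \(\int_\Omega g^2|M|^2\,dx-J_1\ge0\), rather than discarding it.
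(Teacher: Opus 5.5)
\textbf{Gap in the proof of \eqref{eq:alpha-bound-mu3}.} Your two divergence identities, the bound $\int_\Omega W^2\,dx\le\lambda G/5$, the argument for $Q>0$, and the derivation of \eqref{eq:G-bound-mu3} via the scalar Cauchy--Schwarz inequality and the quadratic in $y$ are all correct. That scalar step is the $t\to\infty$ member of your one-parameter family. The gap is that you never derive \eqref{eq:alpha-bound-mu3}, and the recipe you give would fail. Your heuristic is backwards. The family
\[
  \bigl(\lambda\alpha S_2+t(G-\lambda Q)\bigr)^2\le\frac{16\lambda G}{5}\,(S_2+t^2Q)\qquad\text{for all }t\in\R
\]
carries information only when $t$ makes the left side \emph{large} relative to $S_2+t^2Q$. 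Your proposed $t=-\alpha S_2/(4Q)$ turns the left side into $\bigl(\alpha S_2 D/(4Q)\bigr)^2$, with your $D=5\lambda Q-G$. This vanishes at $D=0$, so the inequality is vacuous exactly in the extremal case. More generally, with $D^2$ on the left it cannot yield a bound on $\lambda\alpha^2S_2$ that is linear in $D$.

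\textbf{How to close it.} Maximize over $t$ instead. Equivalently, require the quadratic $t\mapsto\frac{16\lambda G}{5}(S_2+t^2Q)-\bigl(\lambda\alpha S_2+t(G-\lambda Q)\bigr)^2$ to have nonpositive discriminant. With $a=\lambda\alpha S_2$ and $b=G-\lambda Q$, the supremum of $(a+tb)^2/(S_2+t^2Q)$ is $a^2/S_2+b^2/Q$, attained at $t=bS_2/(aQ)$. (If $\alpha=0$, the claim follows at once from \eqref{eq:G-bound-mu3}.) This gives
\[
  \lambda^2\alpha^2S_2+\frac{(G-\lambda Q)^2}{Q}\le\frac{16\lambda G}{5},
\]
which is exactly the Bessel inequality for the orthogonal pair $g,\ \eta g$ that the paper uses. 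With $z=G/(\lambda Q)$ it reads
\[
  \frac{\alpha^2S_2}{Q}\le\frac{16}{5}z-(1-z)^2=(5-z)\Bigl(z-\frac15\Bigr).
\]
Since $z\le5$, you have $z-\frac15\le\frac{24}{5}$, hence $\lambda\alpha^2S_2\le\frac{24}{5}(5\lambda Q-G)$. Nonnegativity of the right side also re-derives $z\le5$ directly. So the single Cauchy--Schwarz step is not lossy, and you need none of the leftover slack from \eqref{eq:bochner-weighted}. Once this step is done correctly, your argument coincides with the paper's.
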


\begin{proof}
If \(Q=0\), then the continuous nonnegative function \(\eta^2g^2\)
vanishes everywhere. The open set \(\{g>0\}\) is nonempty, since otherwise
\(u\) would be constant. On \(\{g>0\}\), the identity
\(\eta^2g^2=0\) would give \(u=\alpha\), hence \(\nabla u=0\), a
contradiction. Thus \(Q>0\). Also \(G>0\).

Because the normal component of \(g^2\nabla u\) vanishes on the boundary,
\[
  0=\int_\Omega\operatorname{div}(g^2\nabla u)\,dx.
\]
Using \(\nabla g\cdot\nabla u=2W\) and \(\Delta u=-\lambda u\), we get
\begin{equation}\label{eq:moment-id-0-mu3}
  \int_\Omega gW\,dx=\frac{\lambda\alpha S_2}{4}.
\end{equation}
Similarly,
\[
  0=\int_\Omega\operatorname{div}(\eta g^2\nabla u)\,dx.
\]
Since \(\nabla \eta=\nabla u\), \(\int_\Omega \eta g^2\,dx=0\), and
\(\int_\Omega \eta u g^2\,dx=Q\), this gives
\begin{equation}\label{eq:moment-id-1-mu3}
  \int_\Omega \eta gW\,dx=\frac{\lambda Q-G}{4}.
\end{equation}
The functions
\[
  \phi_0=\frac{g}{\sqrt{S_2}},
  \qquad
  \phi_1=\frac{\eta g}{\sqrt Q}
\]
are orthonormal in \(L^2(\Omega)\). Bessel's inequality, together with
\eqref{eq:moment-id-0-mu3} and \eqref{eq:moment-id-1-mu3}, yields
\begin{equation}\label{eq:bessel-mu3}
  \frac{\lambda^2\alpha^2S_2}{16}
  +\frac{(\lambda Q-G)^2}{16Q}
  \le
  \int_\Omega W^2\,dx.
\end{equation}
By \eqref{eq:bochner-weighted} and the pointwise inequality
\(g^2|M|^2\ge g|M\nabla u|^2\),
\begin{equation}\label{eq:five-bound-mu3}
  5\int_\Omega g|M\nabla u|^2\,dx\le\lambda G.
\end{equation}
Also \(W^2\le g|M\nabla u|^2\) pointwise. Combining this with
\eqref{eq:bessel-mu3} and \eqref{eq:five-bound-mu3} gives
\begin{equation}\label{eq:key-z-pre-mu3}
  \frac{\lambda^2\alpha^2S_2}{16}
  +\frac{(\lambda Q-G)^2}{16Q}
  \le
  \frac{\lambda G}{5}.
\end{equation}
Set \(z=G/(\lambda Q)>0\). After dividing
\eqref{eq:key-z-pre-mu3} by \(\lambda^2Q/16\), we obtain
\begin{equation}\label{eq:z-ineq-mu3}
  \frac{\alpha^2S_2}{Q}
  \le
  \frac{16}{5}z-(1-z)^2
  =(5-z)\left(z-\frac15\right).
\end{equation}
The right-hand side is nonnegative, so \(1/5\le z\le5\). The upper bound is
\eqref{eq:G-bound-mu3}. Since \(z\le5\),
\[
  z-\frac15\le\frac{24}{5}.
\]
Therefore \eqref{eq:z-ineq-mu3} gives
\[
  \alpha^2S_2\le\frac{24}{5}Q(5-z).
\]
Multiplication by \(\lambda\) yields \eqref{eq:alpha-bound-mu3}.
\end{proof}

The second step is an affine estimate for the first-order operator \(R\).
\begin{lemma}\label{lem:smu3}
For every affine function \(h\),
\begin{equation}\label{eq:affine-estimate-mu3}
  \|Rh\|_{L^2(g)}^2\le8\lambda\|h\|_{L^2(g^2)}^2.
\end{equation}
\end{lemma}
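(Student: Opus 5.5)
The plan is to write \(h\) in coordinates adapted to the measure \(g^2\,dx\). Expand \(\|Rh\|_{L^2(g)}^2\) as a quadratic form. Use integration by parts (with the Neumann condition) to remove every term that contains \(a\) linearly, and use \eqref{eq:bochner-h} for the term containing \(a^2\). This should leave an explicit \(2\times2\) quadratic form in the coefficients of \(h\), whose entries involve only \(S_2\), \(Q\), \(G\) and \(\alpha\). Its positivity should then follow from \cref{lem:moments-mu3}.

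\textbf{Coordinates and expansion.} Write \(h(u)=s+t\eta\) with \(\eta=u-\alpha\), so \(h'=t\). Since \(\int_\Omega\eta g^2\,dx=0\), the right-hand side is
\[
8\lambda\|h\|_{L^2(g^2)}^2=8\lambda\bigl(s^2S_2+t^2Q\bigr).
\]
On the left, write \(Rh=(\lambda u-a)h-tg\) and expand:
\[
\|Rh\|_{L^2(g)}^2=\lambda^2\!\int u^2h^2g-2\lambda\!\int u a h^2 g+\int a^2h^2g-2t\!\int (\lambda u-a)hg^2+t^2G .
\]

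\textbf{Removing the \(a\)-linear terms.} Two divergence identities, using \(ag=\nabla g\cdot\nabla u\) and vanishing normal components, handle these. The first is
\[
\int_\Omega u a h^2 g\,dx=\int_\Omega uh^2\nabla g\cdot\nabla u\,dx=-\int_\Omega g\,\operatorname{div}(uh^2\nabla u)\,dx
=-\int h^2g^2-2t\int u h g^2+\lambda\int u^2h^2 g .
\]
With this, the two terms with \(\lambda^2\int u^2h^2g\) combine to \(-\lambda^2\int u^2h^2g\le0\), which can be dropped. This cancellation is what avoids uncontrolled high moments of \(u\). The second is
\[
\int_\Omega a h g^2\,dx=\tfrac12\int_\Omega h\nabla(g^2)\cdot\nabla u\,dx=-\tfrac12\int_\Omega g^2\bigl(tg-\lambda uh\bigr)\,dx .
\]
It expresses this term through \(G\) and the \(g^2\)-moments of \(u\).

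\textbf{Bounding the \(a^2\) term.} Use \(a^2g\le4|M\nabla u|^2\) together with \eqref{eq:bochner-h}. Since \(g|M|^2\ge|M\nabla u|^2\), the latter gives
\[
3\int_\Omega h^2|M\nabla u|^2\,dx\le\lambda\int_\Omega h^2g^2\,dx-t\int_\Omega hag^2\,dx .
\]
The last integral is again evaluated by the second identity above. If the constant \(4/3\) turns out too lossy, I would instead keep part of \(\int h^2g|M|^2\) and use \eqref{eq:bochner-weighted} for the \(t^2\)-part.

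\textbf{Reduction to a matrix inequality.} After substituting, and using \(u=\eta+\alpha\) and \(\int\eta g^2=0\), everything becomes a quadratic form \(As^2+2Bst+Ct^2\). Its entries involve only \(S_2\), \(Q\), \(G\), \(\alpha\) and \(\lambda\). The claim \eqref{eq:affine-estimate-mu3} is then the positive semidefiniteness of
\[
\begin{pmatrix}8\lambda S_2-A & -B\\ -B & 8\lambda Q-C\end{pmatrix}.
\]
I would check the diagonal entries using \(G\le5\lambda Q\) from \eqref{eq:G-bound-mu3}. For the determinant, \(B\) should be proportional to \(\lambda\alpha S_2\), and I would control it with \eqref{eq:alpha-bound-mu3}, whose factor \(5\lambda Q-G\) should match the \((2,2)\) entry.

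\textbf{Main obstacle.} This last determinant check is the step I expect to be hardest. The constants \(24/5\) and \(5\) look tuned precisely so that it closes. The risk is that the cruder \(4/3\) bound on \(\int a^2h^2g\) loses too much, in which case the Bochner inequality must be split more carefully between the \(s^2\) and \(t^2\) parts.
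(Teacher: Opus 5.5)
There is a genuine gap, and it is exactly the step you call the key simplification. After your first divergence identity you discard $-\lambda^2\int_\Omega u^2h^2g\,dx$. That term is indispensable, because the lemma is sharp.

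Take the model case $u=\cos x$ on $(0,\pi)$; this is what $u=\cos x_1$ on a thin box $(0,\pi)\times(0,\varepsilon)^{N-1}$ reduces to, up to the factor $\varepsilon^{N-1}$. There $\lambda=1$, $a=2u''=-2\lambda u$, $\alpha=0$, $Q=\pi/16$ and $G=5\pi/16=5\lambda Q$. For $h(t)=t$ you get $Rh=4\cos^2x-1$ and $\|Rh\|_{L^2(g)}^2=\pi/2=8\lambda\|h\|_{L^2(g^2)}^2$. The discarded quantity, however, is $\int_0^\pi\cos^4x\sin^2x\,dx=\pi/16>0$. So the inequality you would be left to prove, $\|Rh\|_{L^2(g)}^2+\lambda^2\int u^2h^2g\le 8\lambda\|h\|_{L^2(g^2)}^2$, is false in the extremal configuration.

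Carrying out your plan confirms this. Take $8\lambda\|h\|^2_{L^2(g^2)}$ minus your upper bound for $\|Rh\|^2_{L^2(g)}$. Its $t^2$-coefficient comes out as $\frac13(7\lambda Q-2G)$, which is not a multiple of $5\lambda Q-G$. It is negative whenever $G>\frac72\lambda Q$, and \eqref{eq:G-bound-mu3} does not exclude this: the model has $G=5\lambda Q$ and $\alpha=0$.

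Your fallback of sharpening the $a^2$ estimate cannot help either. In the model $g|M|^2=|M\nabla u|^2$ and $a^2g=4|M\nabla u|^2$. Hence $\int a^2\eta^2 g=4\lambda Q=\frac23(\lambda Q+G)$, so that bound is already an equality.

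The missing idea is to keep $U_h:=\lambda^2\int u^2h^2g$ and bound it from below, without computing any higher moments of $u$. Do this by Cauchy--Schwarz between $\lambda uh$ and $ah$ in $L^2(g)$. Expand $\int_\Omega(\lambda uh+\frac12ah)^2g\,dx\ge0$. Then insert your first identity in the form $\lambda\int uah^2g=U_h-\lambda\bigl(\int h^2g^2+2t\int uhg^2\bigr)$. This gives
\[
  \lambda^2\int_\Omega u^2h^2g\,dx
  \ge\frac{\lambda}{2}\Bigl(\int_\Omega h^2g^2\,dx+2t\int_\Omega uhg^2\,dx\Bigr)
  -\frac18\int_\Omega a^2h^2g\,dx .
\]
This is the paper's step: the Gram block matrix of $(\lambda uh,ah)$ is positive semidefinite, $\mathbf K=\mathbf U-\mathbf D$, and one tests on $(v,v/2)$.

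Combine this with your expansion and your $a^2$ bound, now applied with coefficient $\frac98$. The defect $8\lambda\|h\|^2_{L^2(g^2)}-\|Rh\|^2_{L^2(g)}$ is then at least
\[
  \tfrac34(5\lambda Q-G)\,t^2-\tfrac54\lambda\alpha S_2\,st+5\lambda S_2\,s^2 .
\]
Its diagonal entries are nonnegative by \eqref{eq:G-bound-mu3}. Its determinant is at least $\frac{15}{8}\lambda S_2(5\lambda Q-G)\ge0$ by \eqref{eq:alpha-bound-mu3}. In the model $\lambda uh+\frac12ah\equiv0$, which is why this lower bound loses nothing.
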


\begin{proof}
Every affine function can be written uniquely as \(h=h_v\), where, for
\(v=(\beta,\gamma)^{\top}\),
\[
  h_v(t)=\beta(t-\alpha)+\gamma,
  \qquad
  h_v(u)=\beta \eta+\gamma,
  \qquad
  \Phi=\binom{\eta}{1}.
\]
Thus \(h_v(u)=v^{\top}\Phi\) and \(h_v'=\beta\). All matrix
inequalities below are inequalities of symmetric quadratic forms on
\(\R^2\). Define
\begin{align*}
  \mathbf U&=\int_\Omega \lambda^2u^2\Phi\Phi^{\top}g\,dx,\\
  \mathbf K&=\int_\Omega \lambda ua\,\Phi\Phi^{\top}g\,dx,\\
  \mathbf V&=\int_\Omega a^2\Phi\Phi^{\top}g\,dx.
\end{align*}
For arbitrary \(p,r\in\R^2\),
\[
  \binom{p}{r}^{\top}
  \begin{pmatrix}
    \mathbf U&\mathbf K\\
    \mathbf K&\mathbf V
  \end{pmatrix}
  \binom{p}{r}
  =\int_\Omega (\lambda u h_p+a h_r)^2g\,dx\ge0.
\]
Hence
\begin{equation}\label{eq:gram-block-mu3}
  \begin{pmatrix}
    \mathbf U&\mathbf K\\
    \mathbf K&\mathbf V
  \end{pmatrix}\succeq0.
\end{equation}

We first express \(\mathbf K\) in terms of \(\mathbf U\). Expanding
\[
  0=\int_\Omega\operatorname{div}(u h_v^2 g\nabla u)\,dx
\]
gives
\begin{equation}\label{eq:K-identity-scalar-mu3}
  \int_\Omega uah_v^2g\,dx
  =
  \lambda\int_\Omega u^2h_v^2g\,dx
  -\int_\Omega h_v^2g^2\,dx
  -2\beta\int_\Omega u h_v g^2\,dx.
\end{equation}
The centering condition \(\int_\Omega \eta g^2\,dx=0\) implies
\begin{align}
  \int_\Omega h_v^2g^2\,dx&=\beta^2Q+\gamma^2S_2,
  \label{eq:A-moment-mu3}\\
  \int_\Omega u h_v g^2\,dx&=\beta Q+\alpha\gamma S_2.
  \label{eq:T-moment-mu3}
\end{align}
Multiplying \eqref{eq:K-identity-scalar-mu3} by \(\lambda\) and using the
definitions of \(\mathbf U\) and \(\mathbf K\), we get, for every \(v\),
\[
  v^{\top}\mathbf K v
  =v^{\top}\mathbf U v
  -\lambda\left(
    \int_\Omega h_v^2g^2\,dx
    +2\beta\int_\Omega u h_v g^2\,dx
  \right).
\]
Therefore \eqref{eq:A-moment-mu3} and \eqref{eq:T-moment-mu3} imply the
quadratic-form identity
\begin{equation}\label{eq:KUD-mu3}
  \mathbf K=\mathbf U-\mathbf D,
\end{equation}
where
\begin{equation}\label{eq:D-matrix-mu3}
  \mathbf D
  =\lambda
  \begin{pmatrix}
    3Q&\alpha S_2\\
    \alpha S_2&S_2
  \end{pmatrix}.
\end{equation}

We next obtain an upper bound for \(\mathbf V\). By \eqref{eq:bochner-h}
with \(h=h_v\), and since \(g|M|^2\ge |M\nabla u|^2\),
\[
  3\int_\Omega h_v^2|M\nabla u|^2\,dx
  \le
  \lambda\int_\Omega h_v^2g^2\,dx
  -\beta\int_\Omega h_vag^2\,dx.
\]
Using \(a^2g\le4|M\nabla u|^2\), we obtain
\begin{equation}\label{eq:V-first-mu3}
  v^{\top}\mathbf Vv
  \le
  \frac43\left(
    \lambda\int_\Omega h_v^2g^2\,dx
    -\beta\int_\Omega h_vag^2\,dx
  \right).
\end{equation}
On the other hand,
\[
  0=\int_\Omega\operatorname{div}(h_vg^2\nabla u)\,dx
\]
gives
\begin{equation}\label{eq:ha-identity-mu3}
  2\int_\Omega h_vag^2\,dx
  =\lambda\int_\Omega u h_v g^2\,dx-\beta G.
\end{equation}
Substituting \eqref{eq:A-moment-mu3}, \eqref{eq:T-moment-mu3}, and
\eqref{eq:ha-identity-mu3} into \eqref{eq:V-first-mu3}, we obtain
\begin{equation}\label{eq:V-B-mu3}
  \mathbf V\preceq\mathbf B,
\end{equation}
where
\begin{equation}\label{eq:B-matrix-mu3}
  \mathbf B=
  \begin{pmatrix}
    \dfrac23(\lambda Q+G)&-\dfrac13\lambda\alpha S_2\\[1ex]
    -\dfrac13\lambda\alpha S_2&\dfrac43\lambda S_2
  \end{pmatrix}.
\end{equation}

By \eqref{eq:gram-block-mu3}, \eqref{eq:KUD-mu3}, and
\eqref{eq:V-B-mu3},
\begin{equation}\label{eq:enlarged-block-mu3}
  \begin{pmatrix}
    \mathbf U&\mathbf U-\mathbf D\\
    \mathbf U-\mathbf D&\mathbf B
  \end{pmatrix}\succeq0.
\end{equation}
Indeed, this matrix is obtained from the positive semidefinite block matrix
in \eqref{eq:gram-block-mu3} by using \(\mathbf K=\mathbf U-\mathbf D\) and
then adding the positive semidefinite block matrix
\[
  \begin{pmatrix}
    0&0\\
    0&\mathbf B-\mathbf V
  \end{pmatrix}.
\]
Testing \eqref{eq:enlarged-block-mu3} on \((v,v/2)\in\R^2\times\R^2\)
gives
\[
  0\le 2v^{\top}\mathbf Uv-v^{\top}\mathbf Dv
  +\frac14v^{\top}\mathbf Bv,
\]
which is equivalent to
\begin{equation}\label{eq:U-lower-mu3}
  \mathbf U\succeq\frac12\mathbf D-\frac18\mathbf B.
\end{equation}

Define
\[
  \mathcal E[h_v]
  =8\lambda\int_\Omega h_v^2g^2\,dx
  -\int_\Omega (Rh_v)^2g\,dx.
\]
Let
\[
  A_v=\int_\Omega h_v^2g^2\,dx,
  \qquad
  T_v=\int_\Omega u h_vg^2\,dx,
  \qquad
  J_v=\int_\Omega h_vag^2\,dx.
\]
From \eqref{eq:K-identity-scalar-mu3},
\[
  v^{\top}\mathbf Dv=\lambda(A_v+2\beta T_v),
\]
and \eqref{eq:ha-identity-mu3} says
\(2J_v=\lambda T_v-\beta G\). Expanding
\(Rh_v=\lambda uh_v-\beta g-ah_v\), we get
\begin{align*}
  \int_\Omega(Rh_v)^2g\,dx
  &=v^{\top}\mathbf Uv+
    \beta^2G+v^{\top}\mathbf Vv
    -2\lambda\beta T_v-2v^{\top}\mathbf Kv+2\beta J_v\\
  &=-v^{\top}\mathbf Uv+v^{\top}\mathbf Vv
    -\lambda\beta T_v+2v^{\top}\mathbf Dv.
\end{align*}
Consequently,
\begin{equation}\label{eq:defect-matrix-mu3}
  \mathcal E[h_v]
  =v^{\top}(\mathbf U-\mathbf V+\mathbf E_0)v,
\end{equation}
where
\begin{equation}\label{eq:E0-matrix-mu3}
  \mathbf E_0=
  \begin{pmatrix}
    3\lambda Q&-\dfrac32\lambda\alpha S_2\\[1ex]
    -\dfrac32\lambda\alpha S_2&6\lambda S_2
  \end{pmatrix}.
\end{equation}
Using \eqref{eq:defect-matrix-mu3}, \eqref{eq:U-lower-mu3}, and \eqref{eq:V-B-mu3}, we get
\begin{equation}\label{eq:defect-lower-mu3}
  \mathcal E[h_v]
  \ge
  v^{\top}\left(\frac12\mathbf D-\frac98\mathbf B+\mathbf E_0\right)v.
\end{equation}
Using \eqref{eq:D-matrix-mu3}, \eqref{eq:B-matrix-mu3}, and
\eqref{eq:E0-matrix-mu3}, the entries of
\(\frac12\mathbf D-\frac98\mathbf B+\mathbf E_0\) are
\begin{align*}
  (1,1):\quad&
  \frac32\lambda Q-\frac34(\lambda Q+G)+3\lambda Q
  =\frac34(5\lambda Q-G),\\
  (1,2):\quad&
  \frac12\lambda\alpha S_2+\frac38\lambda\alpha S_2
  -\frac32\lambda\alpha S_2
  =-\frac58\lambda\alpha S_2,\\
  (2,2):\quad&
  \frac12\lambda S_2-\frac32\lambda S_2+6\lambda S_2
  =5\lambda S_2.
\end{align*}
Thus
\begin{equation}\label{eq:L-matrix-mu3}
  \frac12\mathbf D-\frac98\mathbf B+\mathbf E_0
  =\mathbf L
  :=
  \begin{pmatrix}
    \dfrac34D_0&-\dfrac58\lambda\alpha S_2\\[1ex]
    -\dfrac58\lambda\alpha S_2&5\lambda S_2
  \end{pmatrix},
  \qquad
  D_0=5\lambda Q-G.
\end{equation}
By \eqref{eq:L-matrix-mu3} and \cref{lem:moments-mu3}, \(D_0\ge0\) and therefore
\[
  \det\mathbf L
  =\frac{15}{4}\lambda S_2D_0
    -\frac{25}{64}\lambda^2\alpha^2S_2^2
  \ge
    \left(\frac{15}{4}-\frac{25}{64}\cdot\frac{24}{5}\right)
    \lambda S_2D_0
  =\frac{15}{8}\lambda S_2D_0\ge0.
\]
Both diagonal entries of \(\mathbf L\) are nonnegative, so
\(\mathbf L\succeq0\). It follows from \eqref{eq:defect-lower-mu3} that
\(\mathcal E[h_v]\ge0\), which is \eqref{eq:affine-estimate-mu3}.
\end{proof}

\begin{proof}[Proof of \cref{prop:weighted}]
Let \(q\) have degree at most \(2\), and set \(h=q'\). Then \(h\) is
affine. \Cref{lem:adjoint} gives
\[
  \|h\|_{L^2(g^2)}^2
  =\int_\Omega q'(u)h(u)g^2\,dx
  =\int_\Omega q(u)Rh\,g\,dx.
\]
By the Cauchy--Schwarz inequality and \cref{lem:smu3},
\[
  \|h\|_{L^2(g^2)}^4
  \le \|q\|_{L^2(g)}^2\|Rh\|_{L^2(g)}^2
  \le 8\lambda\|q\|_{L^2(g)}^2\|h\|_{L^2(g^2)}^2.
\]
If \(\|h\|_{L^2(g^2)}=0\), then \eqref{eq:weighted-quadratic} is immediate.
Otherwise, division by \(\|h\|_{L^2(g^2)}^2\) proves \eqref{eq:weighted-quadratic}.
\end{proof}

We now apply the weighted polynomial inequality to the cubic test space
generated by a first eigenfunction.
\begin{proposition}\label{prop:mu3-smooth}
Let \(N\ge2\), and let \(\Omega\subset\R^N\) be a smooth bounded convex domain.
Then $\mu_3(\Omega)\le9\mu_1(\Omega)$.
\end{proposition}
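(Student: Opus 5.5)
We use the test space \(\mathcal V=\Span\{1,u,u^2,u^3\}\), where \(u\) is a real first Neumann eigenfunction, together with the operator form of min--max. Let \(A=-\Delta_{\mathrm N}\). For \(P\) a polynomial of degree at most \(3\), the function \(P(u)\) is smooth up to the boundary and satisfies \(\partial_\nu P(u)=P'(u)\partial_\nu u=0\), so \(P(u)\in\operatorname{Dom}(A)\).

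First, \(\dim\mathcal V=4\). A nontrivial relation \(P(u)\equiv0\) would force the continuous function \(u\) to take values in the finite root set of \(P\). Since \(\Omega\) is connected, \(u\) would then be constant, which is impossible.

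Next, we use min--max for \(A\) in the form
\[
\mu_3(\Omega)\le\max_{0\ne f\in\mathcal V'}\frac{\|Af\|^2}{\langle Af,f\rangle},
\]
where \(\mathcal V'\) is a \(3\)-dimensional subspace of \(\mathcal V\) on which \(\langle Af,f\rangle>0\). To justify this, note that \(\mathcal V\) is \(4\)-dimensional, so some nonzero \(f\in\mathcal V\) is orthogonal to the eigenfunctions of \(\mu_0,\mu_1,\mu_2\). Spectral expansion then gives \(\|Af\|^2\ge\mu_3\langle Af,f\rangle\), with \(\langle Af,f\rangle=\int|\nabla f|^2>0\) because \(f\) is nonconstant. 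This is the same mechanism as \eqref{eq:second-order-spectral-quotient}. Hence it suffices to prove, for every \(P\) of degree at most \(3\),
\[
\|AP(u)\|^2\le 9\lambda\int_\Omega|\nabla P(u)|^2\,dx .
\]

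To bound the left side, write \(AP(u)=\lambda uP'(u)-P''(u)g\). Expanding the square gives
\[
\lambda^2\int u^2P'^2-2\lambda\int uP'P''g+\int P''^2g^2 .
\]
We simplify the first term by integrating by parts: test \(-\Delta u=\lambda u\) against \(uP'(u)^2\). Since \(\partial_\nu u=0\), this yields
\[
\lambda\int u^2P'^2=\int (uP'^2)'g=\int P'^2g+2\int uP'P''g .
\]
Substituting, the cross terms cancel exactly, and we obtain the identity \eqref{eq:intro-laplacian-identity}:
\[
\|AP(u)\|^2=\lambda\int P'(u)^2g+\int P''(u)^2g^2 .
\]
Now apply \cref{prop:weighted} with \(q=P'\), which has degree at most \(2\). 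It gives
\[
\int P''^2g^2\le 8\lambda\int P'^2g=8\lambda\int|\nabla P(u)|^2 .
\]
Therefore \(\|AP(u)\|^2\le 9\lambda\int|\nabla P(u)|^2\), and combining with the min--max step yields \(\mu_3\le 9\lambda\).

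The substantive difficulty is already contained in \cref{prop:weighted}. What remains is to check two things carefully. The first is the exact cancellation in the integration by parts; this uses \(\partial_\nu u=0\) and the smoothness of \(u\) up to the boundary. The second is the linear-algebra version of min--max, namely choosing \(f\in\mathcal V\) orthogonal to the first three eigenfunctions and verifying the positivity \(\int|\nabla f|^2>0\).
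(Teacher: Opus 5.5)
Your proposal is correct and follows the paper's proof. You use the same test space $\mathcal V=\Span\{1,u,u^2,u^3\}$, the same exact identity for $\|AP(u)\|^2$ obtained by testing against $uP'(u)^2$, and the same application of \cref{prop:weighted} with $q=P'$; the only difference is the final step, where you apply $\|Af\|^2\ge\mu_3\langle Af,f\rangle$ to a nonzero $f\in\mathcal V$ orthogonal to the first three eigenfunctions (the mechanism of the paper's $\mu_2$ argument), whereas the paper converts $\|Af\|^2\le 9\lambda\int_\Omega|\nabla f|^2$ into $\int_\Omega|\nabla f|^2\le 9\lambda\int_\Omega f^2$ via Cauchy--Schwarz against $f-\bar f$ and then applies the standard min--max principle on $\mathcal V$, and the two routes are equivalent.
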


\begin{proof}
Let \(P\) be a polynomial of degree at most \(3\), and set \(f=P(u)\).
Then
\begin{equation}\label{eq:laplacian-P}
  -\Delta f=\lambda uP'(u)-gP''(u),
  \qquad
  \partial_\nu f=0.
\end{equation}
The following identity is exact:
\begin{equation}\label{eq:laplace-identity}
  \|{-\Delta P(u)}\|_{L^2(\Omega)}^2
  =
  \lambda\int_\Omega P'(u)^2g\,dx
  +\int_\Omega P''(u)^2g^2\,dx.
\end{equation}
Indeed, the boundary flux of \(uP'(u)^2\nabla u\) vanishes, and hence
\[
  0=\int_\Omega\operatorname{div}\bigl(uP'(u)^2\nabla u\bigr)\,dx.
\]
After expansion,
\begin{equation}\label{eq:cancellation}
  \lambda\int_\Omega u^2P'(u)^2\,dx
  =
  \int_\Omega P'(u)^2g\,dx
  +2\int_\Omega uP'(u)P''(u)g\,dx.
\end{equation}
Expanding the square in \eqref{eq:laplacian-P} and using
\eqref{eq:cancellation} proves \eqref{eq:laplace-identity}.

Apply \cref{prop:weighted} to \(q=P'\). Since \(P'\) has degree at most
\(2\),
\[
  \int_\Omega P''(u)^2g^2\,dx
  \le
  8\lambda\int_\Omega P'(u)^2g\,dx.
\]
Thus \eqref{eq:laplace-identity} yields
\begin{equation}\label{eq:Delta-bound}
  \|{-\Delta f}\|_{L^2(\Omega)}^2
  \le
  9\lambda\int_\Omega |\nabla f|^2\,dx.
\end{equation}
Let
\[
  \bar f=\frac1{|\Omega|}\int_\Omega f\,dx.
\]
Since \(\partial_\nu f=0\), the function \(-\Delta f\) is orthogonal to
constants, and integration by parts gives
\[
  \int_\Omega |\nabla f|^2\,dx
  =\langle -\Delta f,f-\bar f\rangle_{L^2(\Omega)}.
\]
The Cauchy--Schwarz inequality and \eqref{eq:Delta-bound} imply
\[
  \left(\int_\Omega |\nabla f|^2\,dx\right)^2
  \le
  9\lambda
  \left(\int_\Omega |\nabla f|^2\,dx\right)
  \int_\Omega |f-\bar f|^2\,dx.
\]
If the Dirichlet energy vanishes, the desired estimate is trivial.
Otherwise, division by that energy gives
\begin{equation}\label{eq:rayleigh-cubic}
  \int_\Omega |\nabla f|^2\,dx
  \le
  9\lambda\int_\Omega |f-\bar f|^2\,dx
  \le
  9\lambda\int_\Omega f^2\,dx.
\end{equation}

The functions \(1,u,u^2,u^3\) are linearly independent. Indeed, if a
polynomial \(P\) of degree at most \(3\) satisfies \(P(u)=0\) almost
everywhere, continuity gives \(P(u)=0\) everywhere. Since \(\Omega\) is
connected and \(u\) is continuous and nonconstant, \(u(\Omega)\) is a
nondegenerate interval. Hence \(P\) has infinitely many zeros and is the
zero polynomial. Therefore $\mathcal V=\Span\{1,u,u^2,u^3\}$
has dimension \(4\). Estimate \eqref{eq:rayleigh-cubic} holds for every
\(f\in\mathcal V\). By the min--max principle,
\[
  \mu_3(\Omega)
  \le
  \sup_{0\ne f\in\mathcal V}
  \frac{\int_\Omega |\nabla f|^2\,dx}{\int_\Omega f^2\,dx}
  \le9\lambda=9\mu_1(\Omega).
\qedhere\]
\end{proof}

\section{Extension to nonsmooth convex domains and sharpness}\label{sec:approx-sharp}
The preceding two sections prove the desired estimates under the additional
assumption that the boundary is smooth. We now remove this assumption by
combining smooth approximation of convex bodies with spectral continuity for
Neumann eigenvalues on convex domains.

\begin{lemma}\label{lem:approximation}
Let \(N\ge2\), \(k\ge1\), and \(C>0\). Suppose that
\[
  \mu_k(D)\le C\mu_1(D)
\]
holds for every smooth bounded convex domain \(D\subset\R^N\). Then it
holds for every nonempty bounded open convex set \(\Omega\subset\R^N\).
\end{lemma}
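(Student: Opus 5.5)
The plan is to approximate \(\Omega\) by smooth convex domains and pass to the limit using continuity of Neumann eigenvalues along such sequences. Since \(k\) and \(C\) are fixed, it suffices to produce smooth bounded convex domains \(D_j\) with \(\mu_1(D_j)\to\mu_1(\Omega)\) and \(\mu_k(D_j)\to\mu_k(\Omega)\). Then \(\mu_k(\Omega)=\lim\mu_k(D_j)\le C\lim\mu_1(D_j)=C\mu_1(\Omega)\).

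\emph{Step 1: construction.} Translate so that \(0\in\Omega\) and let \(K=\overline\Omega\). There are smooth strictly convex bodies \(K_j\) with \(K_j\to K\) in the Hausdorff metric. One way is to take the Minkowski sum \(K+\varepsilon_jB\) and smooth its support function by convolution on \(SO(N)\), or to use a standard theorem (Schneider, \S3.4). Because \(0\) is interior to \(K\), we can rescale so that \((1-\delta_j)K\subset K_j\subset(1+\delta_j)K\) with \(\delta_j\to0\); Hausdorff convergence gives this. Set \(D_j=\operatorname{int}K_j\).

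\emph{Step 2: spectral convergence.} I would prove this directly by Mosco/compactness arguments rather than cite it. The key inputs are uniform Lipschitz character and uniform extension constants. The domains \(D_j\) are sandwiched between \((1-\delta)\Omega\) and \((1+\delta)\Omega\) and contain a fixed ball \(B_r(0)\). Hence each is star-shaped with respect to that ball, with uniformly bounded diameter, so they are uniformly Lipschitz. This yields \(H^1\) extension operators \(E_j:H^1(D_j)\to H^1(\R^N)\) with uniformly bounded norms.

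For the upper bound \(\limsup\mu_k(D_j)\le\mu_k(\Omega)\), take the first \(k+1\) eigenfunctions of \(\Omega\) and dilate them to \((1+\delta_j)\Omega\supset D_j\). Restricting to \(D_j\) gives test spaces whose Rayleigh quotients converge, because \(|(1+\delta_j)\Omega\setminus D_j|\to0\) and the integrands are uniformly integrable. The same argument with \(\Omega\) and \(D_j\) interchanged handles the restriction direction for the lower bound. For the lower bound \(\liminf\mu_k(D_j)\ge\mu_k(\Omega)\), take \(L^2\)-orthonormal eigenfunctions \(\phi^j_0,\dots,\phi^j_k\) on \(D_j\) with bounded energies. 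Extend them by \(E_j\), extract subsequences converging weakly in \(H^1(\R^N)\) and strongly in \(L^2\) on a large ball (Rellich). The limits restricted to \(\Omega\) are orthonormal, since \(|D_j\triangle\Omega|\to0\) and the \(L^2\) norms converge thanks to uniform bounds. Weak lower semicontinuity of energy on \(\Omega\), or on compact subsets \(\Omega'\Subset\Omega\) which lie inside \(D_j\) for large \(j\), followed by letting \(\Omega'\uparrow\Omega\), bounds the Rayleigh quotient of every combination in the limit span by \(\liminf\mu_k(D_j)\). Min–max then finishes. Applying this with indices \(1\) and \(k\) gives both needed limits; note \(\mu_1>0\) on each domain.

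\emph{Main obstacle.} The delicate step is the uniformity of the extension operators, which is what makes the lower bound work. I would handle it through the uniform star-shapedness with respect to \(B_r(0)\): a radial reflection across \(\partial D_j\), \(x\mapsto\bigl(2-\rho_j(x/|x|)^{-1}|x|\bigr)\)-type maps, built from the gauge functions \(\rho_j\). These are uniformly Lipschitz since \(B_r\subset D_j\subset B_R\). If that becomes messy, I would instead cite the known continuity of Neumann eigenvalues under Hausdorff convergence of convex domains.
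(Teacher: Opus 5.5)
Your proof is correct, but it reaches spectral convergence by a different route from the paper.

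\textbf{The paper's route.} The paper takes outer approximations $K\subset K_j$ and converts Hausdorff convergence into uniform convergence of radial functions $\rho_j\to\rho$, with $r\le\rho_j\le R_1$. It then simply cites Ross's Lipschitz continuity theorem for Neumann eigenvalues on convex domains, which gives $|\mu_m(\Omega_j)-\mu_m(\Omega)|\le C_m\|\rho_j-\rho\|_{L^\infty}$ after shifting the index convention.

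\textbf{Your route.} You prove qualitative convergence by hand:
\begin{itemize}
\item The upper semicontinuity comes from dilating the first $k+1$ eigenfunctions of $\Omega$. This is a fixed finite-dimensional space, so the needed uniform integrability is automatic.
\item The lower semicontinuity comes from compactness of the $D_j$-eigenfunctions. This requires a uniform extension bound, which you correctly flag as the crux and obtain from $B_r(0)\subset D_j\subset B_R(0)$.
\end{itemize}

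\textbf{Comparison.} Only convergence, not a rate, is needed to pass to the limit in $\mu_k\le C\mu_1$. So your version is self-contained, using only min--max, Rellich, and the uniform Lipschitz character of convex bodies pinched between two balls. It would also work for non-convex, uniformly Lipschitz, star-shaped approximations. The paper's version is shorter, but it outsources exactly the step you identify as delicate.

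\textbf{Two small corrections.}
\begin{itemize}
\item Your remark that ``the same argument with $\Omega$ and $D_j$ interchanged'' handles the lower bound is not literally the same argument. There the test functions vary with $j$, so you need their mass in the thin shell near $\partial D_j$ to be small uniformly in $j$. That is precisely what the uniform extension (or a uniform Sobolev bound) supplies, so the compactness argument you then give is what actually carries the lower bound.
\item Your reflection map should read $x\mapsto\bigl(2\rho_j(\hat x)-|x|\bigr)\hat x$, with $\hat x=x/|x|$, on the collar $\rho_j(\hat x)\le|x|\le\tfrac32\rho_j(\hat x)$, combined with a cutoff in $|x|/\rho_j(\hat x)$. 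Its bi-Lipschitz constants depend only on $r$, $R$ and the Lipschitz bound on $\rho_j$, which is controlled by $R/r$ for convex bodies.
\end{itemize}
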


\begin{proof}

Let \(K=\overline\Omega\). Since \(\Omega\) is open and convex,
\(\Omega=\operatorname{int}K\), and \(K\) is a convex body with nonempty
interior. By smooth approximation of convex bodies, obtained for instance
by regularizing support functions and then adding a vanishing Euclidean ball
if necessary, one can choose convex bodies \(K_j\) with \(C^\infty\) strictly
convex boundaries such that
\[
  K\subset K_j,
  \qquad
  K_j\to K
  \quad\text{in the Hausdorff metric};
\]
see, for instance, \cite[Section~3.4]{Schneider2014}. Set
\(\Omega_j=\operatorname{int}K_j\).

Choose an interior point of \(\Omega\) as the origin. Then there exists
\(r>0\) such that \(\overline B_r(0)\subset K\subset K_j\) for all \(j\). For all sufficiently large \(j\), there is \(R_1<\infty\) such that
\(K\cup K_j\subset\overline B_{R_1}(0)\). Put
\(\delta_j=d_H(K_j,K)\), where \(d_H\) denotes the Hausdorff distance between compact subsets of
\(\mathbb R^N\). Since
\[
  K_j\subset K+\delta_j\overline B_1(0)
  \quad\text{and}\quad
  \delta_j\overline B_1(0)\subset \frac{\delta_j}{r}K,
\]
and since \(0\in K\) and \(K\) is convex, so that
\(K+tK=(1+t)K\) for every \(t\ge0\), we have
\[
  K_j\subset K+\frac{\delta_j}{r}K
  =\left(1+\frac{\delta_j}{r}\right)K.
\]
Also \(K\subset K_j\), so if \(\rho_j,\rho:\mathbb S^{N-1}\to(0,\infty)\)
are the radial functions of \(K_j\) and \(K\), then
\[
  \rho(\theta)\le\rho_j(\theta)
  \le\left(1+\frac{\delta_j}{r}\right)\rho(\theta)
  \quad\text{for every }\theta\in\mathbb S^{N-1}.
\]
Using the common outer bound \(R_1\), we obtain $\|\rho_j-\rho\|_{L^\infty(\mathbb S^{N-1})}\le R_1\delta_j/r\longrightarrow0$.

The radial functions are uniformly bounded below by \(r\) and, for all
large \(j\), uniformly bounded above by \(R_1\). Hence Ross's Lipschitz
continuity theorem for Neumann eigenvalues on convex domains in the
radial-function parametrization applies \cite[Theorem~4.2]{Ross2004}.
Taking into account that Ross indexes the Neumann spectrum starting with
\(\mu_1=0\), while our convention is \(\mu_0=0\), we obtain, for each fixed
\(m\ge0\),
\[
  |\mu_m(\Omega_j)-\mu_m(\Omega)|
  \le C_m\|\rho_j-\rho\|_{L^\infty(\mathbb S^{N-1})}
  \longrightarrow 0.
\]
Passing to the limit in
\(\mu_k(\Omega_j)\le C\mu_1(\Omega_j)\) yields the desired estimate.
\end{proof}

We now combine the smooth estimates with the approximation lemma and then
check sharpness.
\begin{proof}[Proof of \cref{thm:mu2} and \cref{thm:mu3}]
If \(N=1\), then \(\Omega\) is an interval \((0,L)\), and $\mu_k((0,L))=\frac{k^2\pi^2}{L^2}$, $k=0,1,2,\ldots$.
Thus both inequalities are equalities.

Assume \(N\ge2\). For smooth bounded convex domains,
\eqref{eq:main-mu2} follows from \cref{prop:mu2-smooth}, and
\eqref{eq:main-mu3} follows from \cref{prop:mu3-smooth}. The general
bounded convex case follows by \cref{lem:approximation}, first with
\((k,C)=(2,4)\) and then with \((k,C)=(3,9)\).

It remains to prove sharpness. For \(N\ge2\), consider the rectangular boxes $\Omega_{L,\varepsilon}=(0,L)\times(0,\varepsilon)^{N-1}$, where $0<\varepsilon<L/3$.
Separation of variables gives the Neumann spectrum
\begin{equation}\label{eq:box-spectrum}
  \pi^2\left(
    \frac{n_1^2}{L^2}+
    \frac{n_2^2+\cdots+n_N^2}{\varepsilon^2}
  \right),
  \qquad (n_1,\ldots,n_N)\in\mathbb N_0^N.
\end{equation}
The assumption \(\varepsilon<L/3\) implies
\(\pi^2/\varepsilon^2>9\pi^2/L^2\). Thus every mode with at least one
transverse index \(n_i\ne0\), \(i\ge2\), lies above
\(9\pi^2/L^2\), while the longitudinal modes
\((n_1,n_2,\ldots,n_N)=(0,\ldots,0),(1,0,\ldots,0),(2,0,\ldots,0),
(3,0,\ldots,0)\) give the first four values. Therefore the first four values in \eqref{eq:box-spectrum}, counted with
multiplicity, are
\[
  0,\quad \frac{\pi^2}{L^2},\quad \frac{4\pi^2}{L^2},\quad
  \frac{9\pi^2}{L^2}.
\]
It follows that
\[
  \frac{\mu_2(\Omega_{L,\varepsilon})}{\mu_1(\Omega_{L,\varepsilon})}=4,
  \qquad
  \frac{\mu_3(\Omega_{L,\varepsilon})}{\mu_1(\Omega_{L,\varepsilon})}=9.
\]
This proves optimality in every dimension. If one restricts the sharpness
statement to smooth strictly convex domains, the same values are approached
by smooth strictly convex approximations and the spectral convergence argument used in \Cref{lem:approximation}.
\end{proof}

\section*{Acknowledgments and AI disclosure}

During the development and preparation of this work, the authors used a
generative AI tool for preliminary, non-authoritative exploratory assistance,
including organizational discussion and the consideration of possible
approaches.  The AI-generated outputs were not treated as mathematical
sources, and no argument was included without independent verification and
substantial revision by the authors.  All theorem statements, proofs,
computations, references, and final arguments were independently checked,
revised, and finalized by the authors, who take full responsibility for the
correctness, originality, and integrity of the paper.

\end{document}